\newcommand{\smb}{\left[\begin{smallmatrix}}
\newcommand{\sme}{\end{smallmatrix}\right]}
\newtheorem{theorem}{Theorem}
\newtheorem{remark}[theorem]{Remark}
\newtheorem{example}[theorem]{Example}
\newtheorem{lemma}[theorem]{Lemma}
\DeclareMathOperator{\rank}{rank}
\DeclareMathOperator{\h}{\mathcal{H}}
\DeclareMathOperator{\diam}{diam}
\DeclareMathOperator{\diag}{diag}
\DeclareMathOperator{\gap}{\mathsf{gap}}
\DeclareMathOperator{\dist}{dist}
\DeclareMathOperator{\myspan}{span}
\DeclareMathOperator{\sign}{sign}
\DeclareMathOperator{\off}{off}
\DeclareMathOperator{\e}{e}
\DeclareMathOperator{\chol}{chol}
\DeclareMathOperator{\id}{id}
\DeclareMathOperator{\trace}{trace}
\DeclareMathOperator{\SP}{SP}
\DeclareMathOperator{\sn}{sn}
\newcommand{\R}{{\mathbb R}}
\newcommand{\Matlab}{{\sc Matlab}}
\newcommand{\calO}{{\mathcal O}}
\newcommand{\calU}{{\mathcal U}}
\newcommand*\circled[1]{\tikz[baseline=(char.base)]{
            \node[shape=circle,draw,inner sep=0.5pt] (char) {#1};}}
\newcommand*\circledt[1]{\tikz[baseline=(char.base)]{
            \node[shape=circle,draw,inner sep=0.05pt] (char) {#1};}}
\newcommand*\circledr[1]{\tikz[baseline=(char.base)]{
            \node[shape=rectangle,draw,inner sep=0.5pt] (char) {#1};}}       
\tikzstyle{block} = [rectangle, draw, fill=blue!20, 
\newenvironment{lbmatrix}[1]
  {\left[\array{@{}*{#1}{c}@{}}}
  {\endarray\right]}
\title{Fast computation of spectral projectors \\ of banded matrices}
\author{Daniel Kressner\thanks{MATHICSE-ANCHP, \'{E}cole Polytechnique F\'{e}d\'{e}rale de Lausanne, Station 8, 1015 Lausanne, Switzerland. E-mail: daniel.kressner@epfl.ch.} \and  Ana 
\v{S}u\v{s}njara\thanks{MATHICSE-ANCHP, \'{E}cole Polytechnique F\'{e}d\'{e}rale de Lausanne, Station 8, 1015 Lausanne, Switzerland. E-mail: ana.susnjara@epfl.ch. The work of Ana \v{S}u\v{s}njara
has been supported by the SNSF research project \emph{Low-rank updates of matrix functions and fast eigenvalue solvers.} }}
\begin{document}

\date{}

\maketitle

\begin{abstract}
We consider the approximate computation of spectral projectors for symmetric banded matrices. While this problem has received considerable attention, 
especially in the context of linear scaling electronic structure methods, the presence of small relative spectral gaps 
challenges existing methods based on approximate sparsity. In this work, we show how a data-sparse 
approximation based on hierarchical matrices can be used to overcome this problem. We prove a priori bounds on the 
approximation error and propose a fast algorithm based on the QDWH algorithm, along the works by Nakatsukasa et al. 
Numerical experiments demonstrate that the performance of our algorithm is robust with respect to the spectral gap. A
preliminary \Matlab{} implementation becomes faster than {\tt eig} already for matrix sizes of a few thousand.
\end{abstract}

\section{Introduction}

Given a symmetric banded matrix $A\in \mathbb{R}^{n\times n}$ with eigenvalues
\begin{equation*}
\lambda_{1} \leq  \cdots \leq \lambda_{\nu} < \mu < \lambda_{\nu+1} \leq \cdots \leq \lambda_{n},
\end{equation*}
we consider the computation of the  spectral projector $\Pi_{< \mu}(A)$ associated with the eigenvalues $\lambda_{1}, \ldots ,\lambda_{\nu}$. We specifically target the 
situation where both $n$ and $\nu$ are large, say $n = 100\,000$ and $\nu = 50\,000$, which makes approaches based on computing eigenvectors computationally expensive. For a tridiagonal matrix, 
the MRRR algorithm requires $\mathcal{O}(\nu n)$ operations and memory~\cite{DhillParlVoemel2006} 
to compute the $\nu$ eigenvectors needed to define $\Pi_{< \mu}(A)$.

There are a number of applications giving rise to the problem under consideration. First and foremost, 
this task is at the heart of linear scaling methods for the calculation of the electronic structure of molecules with a large number of atoms. For insulators at zero 
temperature, the density matrix is the spectral projector associated with the eigenvalues of the Hamiltonian below the so called 
HOMO-LUMO gap; see~\cite{Goedecker1999} for an overview. The Hamiltonian is usually symmetric and, depending on the discretization and the structure of the molecule, 
it can be (approximately) banded. A number of existing linear scaling methods use that this sometimes implies that the spectral projector may also admit a good approximation by a 
banded matrix; see~\cite{BenBoiRaz2013} for a recent survey and a mathematical justification. For this approach to work well, the HOMO-LUMO gap should not become too small. For metallic systems, 
this gap actually converges to zero, which makes it impossible to apply an approach based on approximate bandedness or, more generally, sparsity.

Another potential important application for banded matrices arises in dense symmetric eigenvalue solvers. The eigenvalues and eigenvectors of a symmetric dense 
matrix $A$ are usually computed by first reducing $A$ to tridiagonal form and then applying either divide-and-conquer method or MRRR; see, e.g.~\cite{AuckBungHuckLang2011,DemmMarqParlVoem2008}
for recent examples. It is by no means trivial to implement the reduction to tridiagonal form efficiently so that it performs well on a modern computing architecture with a memory hierarchy. Most 
existing approaches~\cite{AuckBlumBungHuck2011,BienIgualKressPet2011,HaidLtaiDong2011,HaidSolcGates2013,SoloBallDemmHoef016}, with the notable exception of~\cite{PetsPeisBien2013},
 are based on successive band reduction~\cite{BiscLangSun2000}. In this context, it would be preferable to design an eigenvalue solver 
that works directly with banded matrices, bypassing the need for tridiagonal reduction. While we are not aware of any such extension of MRRR, this possibility has been explored
several times for the divide-and-conquer method, e.g., in~\cite{Arbenz1992,HaidLtaiDong2012}. The variants proposed so far seem to suffer either from numerical instabilities
or from a complexity that grows significantly with the bandwidth. The method proposed in this paper can be used to directly compute the spectral 
projector of a banded matrix, which in turn could potentially be used as a basis for a fast spectral divide and conquer algorithm in the spirit of Nakatsukasa and Higham~\cite{NakaHigh2013}. 

To deal with small spectral gaps, one needs to go beyond sparsity. It turns out that 
hierarchical matrices~\cite{Hackbusch1999}, also called $\h$--matrices, are much better suited in such a setting. 
Intuitively, this can be well explained by considering the approximation of the Heaviside function $\Pi_{< \mu}(x)$ on the eigenvalues of $A$. 
While a polynomial approximation of $\Pi_{< \mu}$ corresponds to a sparse approximation of $\Pi_{< \mu}(A)$~\cite{BenBoiRaz2013}, a rational approximation corresponds to 
an approximation of $\Pi_{< \mu}(A)$ that features hierarchical low-rank structure. It is well known, see, e.g.,~\cite{PetrPop1987}, that a rational 
approximation is more powerful in dealing with nearby singularities, such as $x = \mu$ for $\Pi_{< \mu}(x)$. 

There are a number of existing approaches to use hierarchical low-rank structures for the fast computation of matrix functions, 
including spectral projectors. Beylkin, Coult, and Mohlen\-kamp~\cite{BeylCoultMohl1999} proposed a combination of the Newton--Schulz iteration 
with the HODLR format, a subset of $\h$--matrices, to compute spectral projectors for banded matrices. However,
the algorithm does not fully exploit the potential of low-rank formats; it converts a full matrix to the HODLR format in each iteration. In the context of Riccati and Lyapunov matrix equations, 
the computation of the closely related sign function of an $\h$--matrix has been discussed in~\cite{GrasHackKhor2003,BaurBenner2006}. 
The work in~\cite{GavrHackKhor2002,GavrHackKhor2004,GrasHackKhor2003} involves the $\h$--matrix approximation of resolvents, which is then used to compute the matrix exponential and related matrix functions.

Other hierarchical matrix techniques for eigenvalue problems include slicing-the-spectrum, which uses LDL decompositions to 
compute eigenvalues in a specified interval for symmetric HODLR and HSS matrices~\cite{BennerMach2012} as well as $\h^{2}$--matrices~\cite{BennBoermMach2013}. Approximate 
$\h$--matrix inverses can be used as preconditioners in iterative eigenvalue solvers; see~\cite{Lintner2002,Mach2012} for examples. Recently, Vogel et al.~\cite{VogelXiaCauBal2016} 
have developed a fast divide-and-conquer method for computing all eigenvalues and eigenvectors in the HSS format. However, as the matrix of eigenvectors is represented in a factored form, it would be a 
nontrivial and possibly expensive detour to compute spectral projectors via this approach.

In this paper we propose a new method based on a variant~\cite{NakaHigh2013} of the QR-based dynamically weighted Halley algorithm (QDWH) 
for computing a polar decomposition~\cite{NakaBaiGygi2010}. Our method exploits the fact that the iterates of QDWH applied to a banded matrix 
can be well approximated in the HODLR format. In fact, we show that the memory needed for storing the approximate spectral projector depends only logarithmically 
on the spectral gap, a major improvement over approximate sparsity. The implementation of QDWH requires some care, in particular, 
concerning the representation of the first iterate. One major contribution of this work is to show how this can be done efficiently.

The remainder of the paper is organized as follows. In Section~\ref{qdwh_algorithm}, we review the QDWH algorithm for computing a spectral projector $\Pi_{< \mu}(A)$. Section~\ref{hmatrices} recalls well-known
facts about the HODLR format and the corresponding formatted arithmetics. Based on the best rational approximation to the sign function, we derive new a priori bounds on the singular values 
for off-diagonal blocks of $\Pi_{< \mu}(A)$, from which we deduce bounds on the memory required to store $\Pi_{< \mu}(A)$ approximately in the HODLR format. Section~\ref{qr-based-iteration} discusses 
the efficient realization of the QR decomposition required in the first iterate of the QDWH algorithm. Section~\ref{overall_algorithm} summarizes our newly proposed QDWH algorithm in the HODLR format and 
provides implementation details. Finally, numerical experiments both for tridiagonal and banded matrices are shown in Section~\ref{experiments}.

\section{Computation of spectral projectors via QDWH}
\label{qdwh_algorithm}

In the following, we assume $\mu = 0$ without loss or generality, and thus consider the computation of the spectral projector $\Pi_{< 0}(A)$ associated with the negative eigenvalues of a 
symmetric nonsingular matrix $A \in \R^{n\times n}$. Following~\cite{NakaHigh2013}, our approach is based on a well-known connection to the polar decomposition.

The polar decomposition~\cite[Chapter~9]{GolVanL2013} of $A$ takes the form $A = UH$ for an orthogonal matrix $U$ and a symmetric positive definite matrix $H$. Let 
$A  = V\Lambda V^{*}$ be a spectral decomposition of $A$ such that $\Lambda = \operatorname{diag}(\Lambda_{-}, \Lambda_{+})$, where $\Lambda_{-}$ and $\Lambda_{+}$ 
are diagonal matrices containing the $\nu$ negative and the $n-\nu$ positive eigenvalues of $A$, respectively. Then
\begin{align}
 A &= V \operatorname{diag}(\Lambda_{-}, \Lambda_{+}) V^{*} \notag\\
   &= \underbrace{V \operatorname{diag}(-I_{\nu}, I_{n-\nu}) V^{*}}_{=:U} \cdot \underbrace{V\operatorname{diag}(\vert \Lambda_{-}\vert, \vert \Lambda_{+}\vert) V^{*}}_{=:H} \notag 
\end{align}
gives the polar decomposition of $A$.
In particular, this shows that the matrix sign function $\sign(A)$ coincides with the orthogonal factor $U$ from the polar decomposition. 
More importantly, $\Pi_{< 0}(A) = \frac{1}{2}(I - U)$.

\subsection{QDWH algorithm}
\label{qdwh_algorithm1}
The QDWH algorithm~\cite{NakaBaiGygi2010} computes the polar factor $U$ of $A$ as the limit of the sequence $X_k$ defined by 
\begin{align}
\label{eq:reccur}
X_0 &= A/\alpha\text{,}\notag \\
X_{k+1} &= X_k(a_k I + b_kX_k^{*}X_k)(I + c_kX_k^{*}X_k)^{-1}\text{.}
\end{align}
The parameter $\alpha > 0$ is an estimate of $\Vert A \Vert_2$. The parameters $a_k, b_k, c_k$ are computed via the relations 
\begin{equation}
\label{eq:qdwh_parameters_abc}
 a_k = h(l_k),\quad b_k = (a_k-1)^2/4, \quad c_k = a_k + b_k - 1.
\end{equation}
Representing a lower bound for the smallest singular value of $X_k$, the parameter $l_k$  is determined by the recurrence 
\[
l_k = l_{k-1}(a_{k-1} + b_{k-1}l^{2}_{k-1})/(1 + c_{k-1}l^{2}_{k-1}), \quad k \geq 1,
\]
where $l_{0}$ is a lower bound for $\sigma_{\min}(X_0)$.
The function $h$ is given by
\begin{equation*}
h(l) = \sqrt{1+ \gamma} + \frac{1}{2}\sqrt{8 - 4\gamma + \frac{8(2 - l^2)}{l^2\sqrt{1 + \gamma}}},\quad \gamma = \sqrt[3]{\frac{4(1 - l^2)}{l^4}}.
\end{equation*}
The efficient estimation of $\alpha$ and $l_{0}$, required to start the recurrence, will be discussed in Section~\ref{overall_algorithm}.  

The QDWH algorithm is cubically convergent and it has been shown in~\cite{NakaBaiGygi2010} that at most $k = 6$ iterations are needed to obtain convergence within tolerance $10^{-16}$, i.e. 
$\Vert X_6 - U \Vert_2 < 10^{-16}$ for every matrix $A$ with $\kappa(A) \leq 10^{16}$. 

The recurrence~\eqref{eq:reccur} has the equivalent form
\begin{subequations}
\label{eq:qdwh_qr_reccur}
\begin{align}
 X_0 &= A/\alpha \label{eq:qdwh_it_0},\\
X_{k+1} &= \frac{b_k}{c_k}X_k + \frac{1}{\sqrt{c_k}}\left(a_k - \frac{b_k}{c_k}\right)Q_1Q_2^{*}\label{eq:qdwh_qr} \text{,}
\end{align}
\end{subequations}
with the QR decomposition
\begin{equation} 
\label{eq:qdwh_qr_decomposition}
 \begin{bmatrix}
  \sqrt{c_k}X_k \\
  I
 \end{bmatrix}  =  
 \begin{bmatrix}
  Q_1 \\
  Q_2
 \end{bmatrix}R\text{.} 
\end{equation}
Throughout the paper, we refer to~\eqref{eq:qdwh_qr_reccur} as a \emph{QR-based iteration}. On the other hand, as observed in~\cite{NakaHigh2013}, the recurrence~\eqref{eq:reccur} can also be rewritten in terms of the \emph{Cholesky-based iteration}
\begin{subequations}
\label{eq:qdwh_chol_reccur}
\begin{align}
 Z_k &= I + c_k X_k^{*}X_k, \hskip 5pt W_k = \chol(Z_k), \label{eq:qdwh_chol_1}\\ 
 X_{k+1} &= \frac{b_k}{c_k}X_k + \left( a_k - \frac{b_k}{c_k}\right)(X_kW_k^{-1})W_k^{-*}\label{eq:qdwh_chol2},
\end{align} 
\end{subequations}
where $\chol(Z_k)$ denotes the Cholesky factor of $Z_k$. 

Following~\cite{NakaBaiGygi2010}, either variant of the QDWH 
algorithm is terminated when $l_k$ is sufficiently close to $1$, that is, $\vert 1 - l_k\vert \leq \delta$ for some stopping tolerance $\delta$, say $\delta = 10^{-15}$.

We mention that a higher--order variant of QDWH, called Zolo-pd, has recently been proposed by Freund and Nakatsukasa~\cite{NakaFreund2015}. This method approximates the polar 
decomposition in at most two iterations but requires more arithmetic per iteration.

\subsection{Switching between QR-based and Cholesky-based iterations}

Due to its lower operation count, it can be expected that one Cholesky-based iteration~\eqref{eq:qdwh_chol_reccur} is faster than one QR-based iteration~\eqref{eq:qdwh_qr_reccur}. However, when $Z_k$ is ill-conditioned, 
which is signaled by a large value of $c_k$,
the numerical stability of~\eqref{eq:qdwh_chol_reccur} can be jeopardized. To avoid this, it is proposed in~\cite{NakaHigh2013} to switch from~\eqref{eq:qdwh_qr_reccur} to~\eqref{eq:qdwh_chol_reccur} as soon as $c_k \leq 100$. 
Since $c_k$ converges monotonically from above to $3$, this implies that this hybrid approach will first perform a few QR-based iterations and then switch for good to Cholesky-based iterations. 
In fact, numerical experiments presented in~\cite{NakaHigh2013} indicate that at most two QR-based iterations are performed.

For reasons explained in Remark~\ref{remark:qr} below, we prefer to perform only one QR-based iteration and then switch to Cholesky-based iterations. To explore the impact of this choice on numerical accuracy, 
we perform a comparison of the QDWH algorithm proposed in~\cite{NakaHigh2013} with a variant of QDWH that performs only one QR-based iteration. We consider the following error measures: 
\begin{align} \label{eq:error_measures}
\begin{split}
  e^{Q}_{\id} &:= \Vert U^2 - I\Vert_2,\\ 
 e^{Q}_{\trace}&:= \vert \trace(U) - \trace(\sign(A))\vert,\\
 e^{Q}_{\SP} &:= \Big\Vert \frac{1}{2}(I-U) - \Pi_{<0}(A)\Big\Vert_2, 
 \end{split}
\end{align}
where $U$ denotes the output of the QDWH algorithm, and $\Pi_{<0}(A)$ the spectral projector returned by the \Matlab{} function \texttt{eig}.

\begin{example} \rm 
Let $A\in \mathbb{R}^{2000\times 2000}$ be a symmetric tridiagonal matrix constructed as described in Section~\ref{sec:construct_test_matrices}, such that 
half of the spectrum of $A$ is contained in $[-1, \hskip 3pt -\gap]$ and the other half in $[\gap, \hskip 3pt 1]$, for 
$\gap \in \left\lbrace  10^{-1}, 10^{-5}, 10^{-10}, 10^{-15}\right\rbrace$. 
\begin{table}[h!]
\centering
{\renewcommand{\arraystretch}{1.2}
\begin{tabular}{c|c|c|c|c|c}
Algorithm~\cite{NakaHigh2013} &gap &$10^{-1}$ &$10^{-5}$ &$10^{-10}$ &$10^{-15}$\\
 \hline
 \hline
\multirow{3}{6.5em}{one QR-based iteration~\eqref{eq:qdwh_qr_reccur}}&$e^{Q}_{\trace}$ &$5.55\cdot 10^{-17}$  &$7.22\cdot 10^{-16}$ &$2.22\cdot 10^{-16}$ &$1.11\cdot 10^{-16}$\\  
 \cline{2-6}
&$e^{Q}_{\id}$  &$1.15\cdot 10^{-15}$ &$2.41\cdot 10^{-15}$ &$1.84\cdot 10^{-15}$ &$1.82\cdot 10^{-15}$ \\
\cline{2-6}
&$e^{Q}_{\SP}$  &$1.87\cdot 10^{-14}$ &$4.35\cdot 10^{-12}$ &$1.88\cdot 10^{-6}$ & $1.91\cdot 10^{-2}$\\
\hline
\hline
\multirow{4}{6.5em}{several QR-based iterations~\eqref{eq:qdwh_qr_reccur}}&$e^{Q}_{\trace}$ &$5.55\cdot 10^{-17}$  &$1.22\cdot 10^{-15}$ &$1.53\cdot 10^{-16}$ &$6.25\cdot 10^{-16}$\\  
 \cline{2-6}
&$e^{Q}_{\id}$ &$1.15\cdot 10^{-15}$  &$2.58\cdot 10^{-15}$ &$1.81\cdot 10^{-15}$ &$2.04\cdot 10^{-15}$ \\
\cline{2-6}
&$e^{Q}_{\SP}$ &$1.87\cdot 10^{-14}$  &$2.12\cdot 10^{-12}$ &$2.82\cdot 10^{-6}$ & $3.06\cdot 10^{-2}$\\ 
\cline{2-6}
&\text{$\#$ of \eqref{eq:qdwh_qr_reccur}} &$1$ &$2$ &$2$ &$3$\\
\hline
 \end{tabular}
 }
\caption{Comparison of errors in the QDWH algorithm with one or several QR-based iterations.}
 \label{table:iterations_vs_error}
\end{table}
As can be seen in Table~\ref{table:iterations_vs_error}, the errors obtained by both variants of the QDWH algorithm exhibit a similar behavior. Even for tiny spectral gaps, 
no significant loss  of accuracy is observed if only one QR-based iteration is performed.
\end{example}

\section{Hierarchical matrix approximation of spectral projectors}
\label{hmatrices}

Introduced in the context of integral and partial differential equations,
hierarchical matrices allow for the data-sparse representation of a certain class of dense matrices. In the following, we briefly recall the concept of hierarchical matrices and 
some operations; see, e.g.,~\cite{Bebendorf2008,Hackbusch2015} for more details.

\subsection{Matrices with hierarchical low-rank structures}

\subsubsection{HODLR matrices}

We first discuss \textit{hierarchically off-diagonal low-rank} (HODLR) matrices. For convenience, 
we assume that $n = 2^p$ for $p \in \mathbb{N}$. Given a prescribed maximal off-diagonal rank $k\in \mathbb{N}$, we suppose that a matrix $M\in \R^{n\times n}$
admits the representation
\begin{equation}
 \label{eq:HODLR}
 M = \begin{bmatrix}
   M^{(1)}_{1} &U_1^{(1)}V_1^{(1)^{*}}\\
   U_2^{(1)}V_2^{(1)^{*}} & M^{(1)}_{2}\\ 
     \end{bmatrix},
\end{equation}
where $M^{(1)}_i \in \R^{\frac{n}{2} \times \frac{n}{2}}, U^{(1)}_i, V^{(1)}_i \in \R^{\frac{n}{2}\times k}$, for $i = 1,2$, and $k\ll n$. A HODLR matrix
is obtained by applying~\eqref{eq:HODLR} recursively to the diagonal blocks $M^{(l-1)}_i$, where $i = 1,\ldots, 2^{l-1}$ for the $l$th level of recursion, $2 \leq l \leq p$. 
The recursion terminates when the diagonal blocks are sufficiently small, that is, $\frac{n}{2^l} \leq n_{\min}$ for a minimal block size $n_{\min} \in \mathbb{N}$; see Figure~\ref{fig:h_hodlr_matrices} below 
for an illustration. Formally, we define the set of HODLR matrices with block-wise rank $k$ as 
\begin{equation*}
  \mathcal{H}(k) := \left\lbrace M \in \mathbb{R}^{n \times n}: \rank M\vert_{\off} \leq k  \text{ $\forall$off-diagonal block $M\vert_{\off}$ in recursive subdivision} \right\rbrace\text{.}
\end{equation*}
Any matrix $M \in \mathcal{H}(k)$ admits a data-sparse representation. By storing the off-diagonal blocks in 
terms of their low-rank factors and the diagonal blocks as dense matrices, the memory required for representing $M$ is $\mathcal{O}(kn\log n)$, assuming that $k$ is constant with respect to $n$.

Given a general matrix $A \in \mathbb R^{n\times n}$, an approximation $M \in \mathcal{H}(k)$ to $A$ is obtained by computing 
truncated singular value decompositions of the off-diagonal blocks of $A$. The quality of such an approximation is governed by the truncated singular values. 
For simplifying the presentation, we have assumed that the ranks in the off-diagonal blocks are all bounded by the same integer $k$. In practice, we choose these ranks adaptively based on an absolute truncation tolerance $\epsilon$ and they may be different for each block.
 
As explained in~\cite{Bebendorf2008,Hackbusch2015}, several matrix operations can be performed approximately and efficiently within the HODLR format.
The use of formatted arithmetics leads to linear-polylogarithmic complexity for these operations. Table~\ref{table:complexity_HODLR} summarizes the complexity of operations needed by the QDWH algorithm 
for $M_1,M_2,R\in \mathcal{H}(k)$, where $T$ is triangular, and $v\in \mathbb{R}^n$.
 \begin{table}[ht!]
 \caption{Complexity of some arithmetic operations in the HODLR format.}
\label{table:complexity_HODLR}
\centering \begin{tabular}{rcc}
 \hline
 Operation  &  & Computational complexity \\ \hline
 Matrix-vector mult. $M_1*_{\mathcal{H}} v$ &  & $\mathcal{O}(kn\log n)$ \\
 Matrix addition $M_1 +_{\mathcal{H}} M_2 \in \mathcal{H}(k)$ &  & $\mathcal{O}(k^2n\log n)$ \\
 Matrix multiplication $M_1 *_{\mathcal{H}} M_2 \in \mathcal{H}(k)$ &  & $\mathcal{O}(k^2n\log^2 n)$ \\
 Cholesky decomposition $\h\operatorname{-Cholesky}(M_1) \in \mathcal{H}(k)$ & & $\mathcal{O}(k^2n\log^2 n)$ \\ 
 Solving triangular system $M_1 *_{\mathcal{H}} T = M_2 \in \mathcal{H}(k)$  & & $\mathcal{O}(k^2n\log^2 n)$ \\ \hline
 \end{tabular}
\end{table}

\begin{remark} 
\label{remark:qr}
The QR-based iteration~\eqref{eq:qdwh_qr_reccur} of QDWH requires the computation of the QR decomposition~\eqref{eq:qdwh_qr_decomposition}.
Unlike for $\h$-Cholesky, there is no straightforward way of performing QR decompositions in  hierarchical matrix arithmetics. To our knowledge, three different algorithms~\cite{Bebendorf2008,BennerMach2010,Lintner2004} have been proposed for this purpose.
However, each of them seems to have some drawbacks, e.g., failing to achieve a highly accurate decomposition or leading to loss of orthogonality in the orthogonal factor. Hence, instead of using any of the existing algorithms, we develop a 
novel method in Section~\ref{qr-based-iteration} to compute the QR decomposition~\eqref{eq:qdwh_qr_reccur} that exploits the particular structure of the matrix in the first iteration of the QDWH algorithm. 
\end{remark}

\subsubsection{Hierarchical matrices} \label{sec:hierarchicalmatrices}

Let $I = \lbrace 1, 2,\ldots, n\rbrace$ denote the row and column index sets of a matrix $M\in \R^{n\times n}$. To consider more general hierarchical matrices, we define a partition $P$ of $I \times I$ as follows. On level $l = 0$, the index set $I^0 := I$ 
is partitioned into $I^0 = I_1^1 \cup I_2^1$, with $I_1^1 = \lbrace 1,\ldots,\frac{n}{2} \rbrace$ and $I^1_2 = \lbrace \frac{n}{2}+1,\ldots,n \rbrace$. 
At this point, the partition $P$ contains five blocks: $I\times I$ and $I^1_i\times I^1_j$ for $i,j = 1,2$. The subdivision continues as follows: on each level $l = 1,\ldots, p-1$ the index sets $I^l_i$ are 
partitioned into sets $I^{l+1}_{2i-1}$ and $I^{l+1}_{2i}$ of equal size, contributing the blocks $I^{l+1}_i\times I^{l+1}_j$ for $i,j = 1,\ldots, 2^l$ to the partition $P$. The recursion terminates when a 
block $I^l_i\times I^l_j$ satisfies a certain admissibility condition or when $\min\lbrace \vert I^l_i\vert, \vert I^l_j\vert\rbrace \leq n_{\min}$ holds.
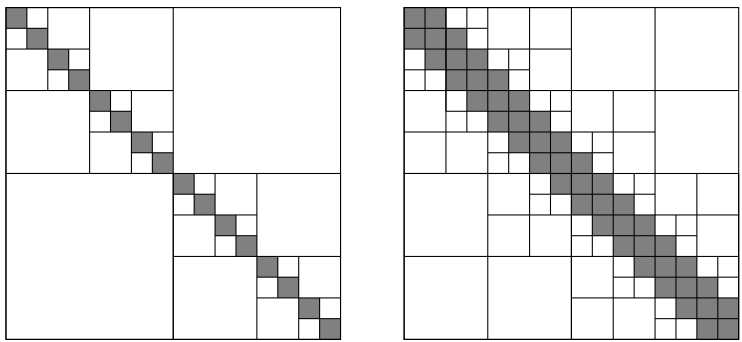
\begin{figure}[ht!]
\centering
\begin{tikzpicture}[scale=0.55]
\fill[gray] (0.5,7)--(1,7)--(1,7.5)--(0.5,7.5);
\fill[gray] (0,8)--(0,7.5)--(0.5,7.5)--(0.5,8);

\fill[gray] (1.5,6)--(2,6)--(2,6.5)--(1.5,6.5);
\fill[gray] (1,7)--(1,6.5)--(1.5,6.5)--(1.5,7);

\fill[gray] (2.5,5)--(3,5)--(3,5.5)--(2.5,5.5);
\fill[gray] (2,6)--(2,5.5)--(2.5,5.5)--(2.5,6);

\fill[gray] (3.5,4)--(4,4)--(4,4.5)--(3.5,4.5);
\fill[gray] (3,5)--(3,4.5)--(3.5,4.5)--(3.5,5);

\fill[gray] (4.5,3)--(5,3)--(5,3.5)--(4.5,3.5);
\fill[gray] (4,4)--(4,3.5)--(4.5,3.5)--(4.5,4);

\fill[gray] (5.5,2)--(6,2)--(6,2.5)--(5.5,2.5);
\fill[gray] (5,3)--(5,2.5)--(5.5,2.5)--(5.5,3);

\fill[gray] (6.5,1)--(7,1)--(7,1.5)--(6.5,1.5);
\fill[gray] (6,2)--(6,1.5)--(6.5,1.5)--(6.5,2);

\fill[gray] (7.5,0)--(8,0)--(8,0.5)--(7.5,0.5);
\fill[gray] (7,1)--(7,0.5)--(7.5,0.5)--(7.5,1);

\draw (0,0) rectangle (8,8);
\draw (0,0) rectangle  (4, 4);
\draw (4,0) rectangle  (8, 4);
\draw (0,4) rectangle  (4, 8);

\draw (0,6)--(4,6);
\draw (2,4)--(2,8);

\draw (4,2)--(8,2);
\draw (6,0)--(6,4);

\draw (0,7)--(2,7);
\draw (1,6)--(1,8);

\draw (0,7.5)--(1,7.5);
\draw (0.5,7)--(0.5,8);

\draw (1,6.5)--(2,6.5);
\draw (1.5,6)--(1.5,7);

\draw (3,4)--(3,6);
\draw (2,5)--(4,5);

\draw (2,5.5)--(3,5.5);
\draw(2.5,5)--(2.5,6);

\draw (3,4.5)--(4,4.5);
\draw (3.5,4)--(3.5,5);

\draw (4,3)--(6,3);
\draw (5,2)--(5,4);

\draw (4,3.5)--(5,3.5);
\draw (4.5,3)--(4.5,4);

\draw[] (5,2.5)--(6,2.5);
\draw (5.5,2)--(5.5,3);

\draw (6,1)--(8,1);
\draw (7,0)--(7,2);

\draw (6,1.5)--(7,1.5);
\draw (6.5,1)--(6.5,2);

\draw (7,0.5)--(8,0.5);
\draw (7.5,0)--(7.5,1);
\end{tikzpicture} 
\qquad
\begin{tikzpicture}[scale=0.55]
\fill[gray] (0,8)--(0,7)--(1,7)--(1,8);
\fill[gray] (1,7)--(1.5,7)--(1.5,7.5)--(1,7.5);
\fill[gray] (1,7)--(1,6.5)--(0.5,6.5)--(0.5,7);

\fill[gray] (1,7)--(1,6)--(2,6)--(2,7);
\fill[gray] (2,6)--(2.5,6)--(2.5,6.5)--(2,6.5);
\fill[gray] (2,6)--(2,5.5)--(1.5,5.5)--(1.5,6);

\fill[gray] (2,6)--(2,5)--(3,5)--(3,6);
\fill[gray] (3,5)--(3.5,5)--(3.5,5.5)--(3,5.5);
\fill[gray] (3,5)--(3,4.5)--(2.5,4.5)--(2.5,5);

\fill[gray] (3,5)--(3,4)--(4,4)--(4,5);
\fill[gray] (4,4)--(4.5,4)--(4.5,4.5)--(4,4.5);
\fill[gray] (4,4)--(4,3.5)--(3.5,3.5)--(3.5,4);

\fill[gray] (4,4)--(4,3)--(5,3)--(5,4);
\fill[gray] (5,3)--(5.5,3)--(5.5,3.5)--(5,3.5);
\fill[gray] (5,3)--(5,2.5)--(4.5,2.5)--(4.5,3);

\fill[gray] (5,3)--(5,2)--(6,2)--(6,3);
\fill[gray] (6,2)--(6.5,2)--(6.5,2.5)--(6,2.5);
\fill[gray] (6,2)--(6,1.5)--(5.5,1.5)--(5.5,2);

\fill[gray] (6,2)--(6,1)--(7,1)--(7,2);
\fill[gray] (7,1)--(7.5,1)--(7.5,1.5)--(7,1.5);
\fill[gray] (7,1)--(7,0.5)--(6.5,0.5)--(6.5,1);

\fill[gray] (7,1)--(7,0)--(8,0)--(8,1);

\draw (0,0) rectangle (8,8);
\draw (0,0) rectangle  (4, 4);
\draw (4,0) rectangle  (8, 4);
\draw (0,4) rectangle  (4, 8);

\draw (0,2)--(8,2);
\draw (0,6)--(8,6);

\draw (2,0)--(2, 8);
\draw (6,0)--(6,8);

\draw (2,3)--(8,3);
\draw (4,1)--(8,1);
\draw (5,0)--(5,6);
\draw (7,0)--(7,4);

\draw (0,7)--(4,7);
\draw (0,5) -- (6,5);
\draw (3,2)--(3,8);
\draw (1,4)--(1,8);

\draw (0,7.5)--(2,7.5);
\draw (0.5, 6)--(0.5,8);
\draw (1.5,5)--(1.5,8);

\draw (0,6.5)--(3,6.5);
\draw (1,5.5)--(1,8);
\draw (2.5, 4)--(2.5,7); 

\draw (1,5.5)--(4,5.5);
\draw (2,4.5)--(5,4.5);
\draw (3.5,3)--(3.5,6);
\draw (4.5,2)--(4.5,5);

\draw (3,3.5)--(6,3.5);
\draw(4,2.5)--(7,2.5);

\draw (5.5,1)--(5.5,4);
\draw (5,1.5)--(8,1.5);
\draw (6.5,0)--(6.5,3);
\draw (6,0.5)--(8,0.5);
\draw (7.5,0)--(7.5, 2);
\end{tikzpicture}
\caption{Left: HODLR matrix. Right: $\h$--matrix with admissibility condition~\eqref{eq:stand_adm}. Blocks colored grey are stored as dense matrices.}
\label{fig:h_hodlr_matrices}
\end{figure}%

Inspired by discretizations for 1D integral equations~\cite{Hackbusch1999}, we make use of the following admissibility condition:
\begin{equation}
\label{eq:stand_adm} 
\text{block } \tau = t \times s\text{ is admissible } \Longleftrightarrow\ \min\{\diam(t), \diam(s) \}\leq  \dist(t,s)\text{,}
\end{equation}
with $$\diam(t):= \underset{i,j\in t} \max \hskip 3pt \vert i-j\vert, \quad \dist(t,s):= \underset{i\in t, j\in s} \min\vert i-j\vert.$$ 
See Figure~\ref{fig:h_hodlr_matrices} for an illustration of the resulting partition $P$. Given $P$, the set of $\h$--matrices with block-wise rank $k$ is defined as 
\begin{equation*}
 \mathcal{H}(P,k) := \left\lbrace M \in \mathbb{R}^{n \times n}: \rank M\vert_{\tau} \leq k \text{ for all admissible blocks $\tau \in P$} \right\rbrace\text{.}
\end{equation*}%
The complexity of arithmetic operations displayed in Table~\ref{table:complexity_HODLR} extends to $\h(P,k)$.   

\begin{example}  \label{ex:hodlrvshmatrix} \rm
We investigate the potential of the HODLR and $\h$--matrix formats to efficiently store spectral projectors of banded matrices. For this purpose, we have generated, as explained in Section~\ref{sec:construct_test_matrices}, a symmetric $b$-banded matrix $A\in \mathbb{R}^{16000\times 16000}$ with
eigenvalues in $[-1,\hskip 3pt -\gap] \cup [\gap, \hskip 3pt 1]$. The memory 
needed to store the full spectral projector $\Pi_{<0}(A)$ in double precision is $2048$ MB. We choose $n_{\min} = 250$, a truncation tolerance $\epsilon = 10^{-10}$, and $\gap \in \{10^{-1},10^{-4}\}$. Table~\ref{table:storage_spec} reveals that the HODLR format often requires less memory to approximately store $\Pi_{<0}(A)$, unless both $\gap$ and the bandwidth are large. In terms of computational time, the outcome is even clearer.
For bandwidth $b = 8$ and $\gap = 10^{-1}$, a situation that favors the $\h$--matrix format in terms of memory, we have run the algorithm described in Section~\ref{overall_algorithm} in both formats. It turned out that the use of the HODLR format
led to an overall time of $608$ seconds, while the $\h$--matrix format required $792$ seconds.
\end{example}

\begin{table}[h!]
\caption{Memory required to approximately store spectral projectors for the banded matrices from Example~\ref{ex:hodlrvshmatrix} in HODLR and $\h$--matrix format.}
 \label{table:storage_spec}
\centering
\begin{tabular}[t]{c||c||c}
$\gap = 10^{-1}$ &HODLR &\text{$\h$--matrix} \\
 \hline
 \hline
 b = 1 &$55.72$ MB & $95.16$ MB\\
 b = 2 &$79.38$ MB &$96.42$ MB\\
 b = 4 &$127.04$ MB  &$106.54$ MB\\
 b = 8  &$219.92$ MB &$151.06$ MB \\
 b = 16 &$395.91$ MB &$291.85$ MB \\
 \hline
\end{tabular}
\quad
\begin{tabular}[t]{c||c||c}
$\gap  = 10^{-4}$ &HODLR &\text{$\h$--matrix} \\
 \hline
 \hline
 b = 1 &$86.03$ MB & $128.58$ MB\\
 b = 2 &$129.71$ MB &$160.56$ MB\\
 b = 4 &$206.32$ MB  &$225.72$ MB\\
 b = 8  &$340.88 $ MB &$352.54$ MB \\
 b = 16 &$567.69$ MB &$583.93$ MB \\
 \hline
\end{tabular}
\end{table}

Based on the evidence provided by Example~\ref{ex:hodlrvshmatrix}, we have concluded that more general $\h$--matrix formats bring little advantage and thus focus on the HODLR format for the rest of this paper.

\subsection{A priori bounds on singular values and memory requirements}

To study the approximation of $\Pi_{<0}(A)$ in the HODLR format, we first derive bounds for the singular values of the off-diagonal ranks based on rational approximations to the $\sign$ function.  In the 
following, we say that a rational function $r$ is of type $(k,s)$ and write $r\in \mathcal{R}_{k,s}$ if $r = p/q$ holds for polynomials $p$ and $q$ of degree at most $k$ and $s$, respectively.
  
\subsubsection{Rational approximation of sign function}
\label{sec:rational_approx}
Given $R>0$, the min-max problem 
\begin{equation}
 \label{eq:min_max_rational}
 \underset{r\in \mathcal{R}_{2m-1, 2m}}\min \underset{x \in [-R,-1]\cup [1,R]}\max \vert \sign(x) - r(x)\vert
\end{equation}
has a unique solution $s_{m}$. Called a Zolotarev function of type $(2m-1, 2m)$ corresponding to $R$ (see e.g.~\cite[Chapter 9]{Akhiezer1990}), this function takes the form
\begin{equation*}
 s_{m}(x) := Cx \frac{\prod_{i = 1}^{m-1} (x^2 +c_{2i})}{ \prod_{i = 1}^{m} (x^2 +c_{2i-1})}.
\end{equation*}
The coefficients $c_i, i = 1,\ldots,2m$ are given in terms of the Jacobi elliptic function $\operatorname{sn}(\cdot;\kappa)$:
\begin{equation}
 \label{eq:zolo_coeff}
 c_i = \frac{\sn^2(\frac{iK(\kappa)}{2m};\kappa)}{1 - \sn^2(\frac{iK(\kappa)}{2m};\kappa)},
\end{equation}
where $\kappa = \sqrt{1 - 1/R^2}$ and $K(\kappa)$ is defined as the complete elliptic integral of the first kind 
$$K(\kappa) = \int_{0}^{\frac{\pi}{2}} \frac{d\theta}{\sqrt{1 - \kappa^{2}\sin^2 \theta}} =\int_{0}^{1} \frac{dt}{\sqrt{(1-t^2)(1 - \kappa^2t^2)}}.$$ 
The constant $C$ is uniquely determined by the condition
\begin{equation*}
\underset{x \in [-R,-1]}\min 1 + s_{m}(x) =  \underset{x \in [1,R]}\max 1 - s_{m}(x).
 \end{equation*}
 
As shown in~\cite{GuetPoliTang2015}, the approximation error $E_{m} := \underset{x \in [-R,-1]\cup [1,R]}\max \vert \sign(x) - s_{m}(x)\vert$ is bounded as 
\begin{equation}
 \label{eq:error_sign_rational}
  \frac{4\rho^{m}}{\rho^{m} +1} \leq E_{m} \leq 4\rho^{m}, 
\end{equation}
where $\rho = \rho(\mu) = \exp\big(-\frac{\pi K(\mu')}{2 K(\mu)}\big)$ with $\mu = \big(\frac{\sqrt{R}-1}{\sqrt{R}+1}\big)^{2}$ and $\mu' = \sqrt{1 - \mu^2}$.
The following lemma derives a bound from~\eqref{eq:error_sign_rational} that reveals the influence of the gap on the error.
\begin{lemma} \label{lemma:approxerror}
 With the notation introduced above and $\gap = 1/R$, it holds that 
\begin{equation}
 \label{eq:upper_bound_simplified}
 E_{m} \leq 4 \exp\bigg(-\frac{\pi^2m}{ 4\log \big( 4/ \sqrt[4]{\gap} + 2\big)}\bigg).
\end{equation}
\end{lemma}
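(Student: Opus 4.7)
The plan is to bound the two factors in $\rho = \exp\bigl(-\pi K(\mu')/(2K(\mu))\bigr)$ separately and then invoke \eqref{eq:error_sign_rational}.

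The first step is to note that $K$ is monotonically increasing on $[0,1)$ -- this is immediate from the integral definition, since the integrand increases pointwise in $\mu$ -- so $K(\mu') \geq K(0) = \pi/2$. Combined with $E_m \leq 4\rho^m$, this reduces the claim to showing
\[ K(\mu) \leq \log\bigl(4/\sqrt[4]{\gap} + 2\bigr). \]

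Next I would bound $K(\mu)$ by an elementary expression in $\mu'$ using the classical estimate $K(\mu) \leq \log(4/\mu' + 2)$, which is asymptotically sharp as $\mu \to 1^-$. One way to prove this is to rewrite
\[ K(\mu) = \int_0^1 \frac{dt}{\sqrt{(1-t^2)(\mu'^2 + \mu^2 t^2)}} \]
(via $t = \cos\theta$), split the integral at $t_\ast = \min\{1,\mu'/\mu\}$, and bound each piece by an elementary antiderivative ($\arcsin$ on the lower piece and a logarithmic primitive on the upper piece). Combining the two pieces produces an upper bound of exactly the advertised form.

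Finally I would translate the $\mu'$-bound into a $\gap$-bound by showing $\mu' \geq \sqrt[4]{\gap}$. Writing $R = 1/\gap$ and $a = \sqrt{R} \geq 1$, the explicit formula
\[ \mu'^2 = \frac{8a(a^2+1)}{(a+1)^4} \]
reduces $\mu' \geq a^{-1/2}$ to the polynomial inequality $8a^2(a^2+1) \geq (a+1)^4$, i.e.\ $7a^4 - 4a^3 + 2a^2 - 4a - 1 \geq 0$. This polynomial factors as $(a-1)(7a^3 + 3a^2 + 5a + 1)$ and is therefore nonnegative for every $a \geq 1$, with equality precisely at $a = 1$. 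Chaining the two bounds gives $K(\mu) \leq \log(4/\mu' + 2) \leq \log(4/\sqrt[4]{\gap} + 2)$, which closes the argument.

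I expect the classical elliptic integral inequality $K(\mu) \leq \log(4/\mu' + 2)$ to be the main obstacle; the remaining pieces are just monotonicity of $K$ and a routine algebraic check. An alternative route would be to use the Landen transformation $K(\mu) = \tfrac{2}{1+\mu'} K\bigl((1-\mu')/(1+\mu')\bigr)$ to iterate toward the limit $K(0) = \pi/2$, but the direct integral estimate seems cleanest for extracting the explicit additive constant $2$ inside the logarithm.
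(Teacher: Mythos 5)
Your proposal follows essentially the same route as the paper: bound $\rho$ via $K(\mu')\geq \pi/2$ and the Braess--Hackbusch estimate $K(\mu)\leq \log(4/\mu'+2)$, then reduce to $\mu'\geq \sqrt[4]{\gap}$; your polynomial factorization $(a-1)(7a^3+3a^2+5a+1)\geq 0$ is a correct (and more explicit) verification of the inequality the paper asserts directly from $\mu'=\sqrt{8\sqrt{\gap}(1+\gap)}/(1+\sqrt{\gap})^2$. The only substantive difference is that you sketch a proof of the classical bound on $K(\mu)$, which the paper simply cites.
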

\begin{proof}
Following Braess and Hackbusch~\cite{BraessHack2005}, we have
$$K(\mu') \geq \pi/2, \hskip 10pt K(\mu) \leq \log (4/ \mu' + 2)\text{.}$$
Thus, the upper bound in~\eqref{eq:error_sign_rational} implies
\begin{equation*}
 E_{m} \leq 4\exp\bigg(-\frac{\pi^2m}{ 4\log ( 4/ \mu' + 2 )}\bigg). 
\end{equation*}
From 
\[
 \mu' = \sqrt{1 - \left(\frac{1 -\sqrt{\gap}}{1+ \sqrt{\gap}}\right)^{4}} = \frac{\sqrt{8\sqrt{\gap}(1+\gap)}}{(1+\sqrt{\gap})^2} \geq \sqrt[4]{\gap}
\]
it follows that $\log ( 4/\mu' + 2 ) \leq  \log ( 4/ \sqrt[4]{\gap} + 2)$, which completes the proof.
\end{proof}

It is simple to bound the ranks of the off-diagonal blocks for a rational function applied to a banded matrix. 

\begin{lemma}
\label{lemma:rational_ranks}
 Consider a $b$-banded matrix $A \in \mathbb{R}^{n\times n}$ and a rational function $r_{m}$ of type $(2m-1, 2m)$, with poles disjoint from the spectrum of $A$. Then the off-diagonal blocks of 
 $r_{m}(A)$ have rank at most $2mb$.  
\end{lemma}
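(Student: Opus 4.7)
The plan is to invoke the partial fraction decomposition of $r_m$. Write $r_m=p/q$ in lowest terms with $\deg p\le 2m-1$ and $\deg q =: D\le 2m$, and let $\xi_1,\ldots,\xi_D$ be the (distinct) poles; these are disjoint from $\operatorname{spec}(A)$ by hypothesis, so each $A-\xi_i I$ is invertible. Assuming first that the poles are simple---which is already the case for the Zolotarev function used later in the paper---partial fractions give
\[
  r_m(A) \;=\; \sum_{i=1}^D c_i\,(A-\xi_i I)^{-1}
\]
for scalars $c_i$, and the problem reduces to bounding the off-diagonal rank of each summand.

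The classical fact I would invoke is that the inverse of an invertible $b$-banded matrix has every off-diagonal HODLR block of rank at most $b$ (Strang's nullity theorem). A direct verification goes as follows: for an off-diagonal block $(I_1,I_2)$ with $I_1=\{1,\ldots,k\}$ and $I_2=\{k+1,\ldots,n\}$, write $B=A-\xi_i I$ in $2\times 2$ block form. Then $B_{12}$ inherits $b$-bandedness, hence $\operatorname{rank}(B_{12})\le b$, and if $B_{11}$ is invertible the Schur-complement formula gives $(B^{-1})_{12} = -B_{11}^{-1}B_{12}(B_{22}-B_{21}B_{11}^{-1}B_{12})^{-1}$, so $\operatorname{rank}\bigl((B^{-1})_{12}\bigr)\le \operatorname{rank}(B_{12})\le b$. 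Summing over $i$ and using subadditivity of rank yields $\operatorname{rank}\bigl(r_m(A)\vert_{I_1,I_2}\bigr)\le Db\le 2mb$, and the same reasoning applies at every level of the HODLR subdivision.

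The main technical hurdle would be extending the argument to higher-order poles, where the naive per-term bound for $(A-\xi_i I)^{-j}$ grows like $jb$ and can sum to more than $2mb$. To recover the tight bound I would bypass partial fractions and work directly from the identity $r_m(A)\,q(A) = p(A)$. Reading off the $(I_1,I_2)$ block gives
\[
  R_{12}Q_{22} \;=\; P_{12} - R_{11}Q_{12},
\]
where $R=r_m(A)$, $P=p(A)$ is $(2m-1)b$-banded, and $Q=q(A)$ is $(2m)b$-banded. The nonzero columns of $P_{12}$ and $Q_{12}$ lie in the leftmost $(2m-1)b$ and $2mb$ columns of $I_2$, respectively, so the right-hand side has at most $2mb$ nonzero columns and hence rank at most $2mb$. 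Invertibility of $Q_{22}$---generic, otherwise obtained by a small shift $A\rightsquigarrow A+\varepsilon I$ since $\varepsilon\mapsto \det q((A+\varepsilon I)\vert_{I_2,I_2})$ is a nonzero polynomial---then transfers this bound to $R_{12}$, and upper semi-continuity of rank closes the argument in the limit.
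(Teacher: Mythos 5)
Your proof is correct and, for the main (simple-pole) case, follows essentially the same route as the paper: partial fractions reduce the claim to the fact that each off-diagonal block of $(A-\xi_i I)^{-1}$ has rank at most $b$. The paper obtains this fact by citing the nullity theorem ($\rank B^{-1}|_{\off} = \rank B|_{\off}$ for invertible $B$), which sidesteps the invertibility-of-$B_{11}$ caveat in your Schur-complement verification; that caveat is harmless since you could always fall back on the cited theorem. Where you genuinely diverge is the treatment of higher-order poles: the paper simply perturbs $r_m$ to have simple poles and invokes semi-continuity of rank, whereas your identity $R_{12}Q_{22}=P_{12}-R_{11}Q_{12}$, combined with the $(2m-1)b$- and $2mb$-bandedness of $p(A)$ and $q(A)$, gives a direct algebraic bound that is blind to pole multiplicities. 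This second argument in fact subsumes the first and would stand alone as a complete proof; it also quietly covers the case $\deg p\ge\deg q$, where your displayed partial-fraction expansion would otherwise need an additional polynomial term. The residual perturbation you use --- shifting $A$ to make $Q_{22}$ invertible, justified by the nonvanishing of $\det q\big((A+\varepsilon I)\vert_{I_2,I_2}\big)$ as a polynomial in $\varepsilon$ --- is sound; just note that the property invoked at the very end is that $\lbrace M:\rank M\le 2mb\rbrace$ is closed, i.e.\ \emph{lower} (not upper) semicontinuity of the rank, which is also what the paper's own closing sentence relies on.
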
 
\begin{proof} 
Assuming that $r$ has simple poles, let $r_{m}(x) = \sum_{i=1}^{2m} \omega_i (x - \mu_i)^{-1}$ be a partial fraction expansion of $r_{m}$, with $\omega_i, \mu_i\in\mathbb{C}, i = 1,\ldots, 2m$. 
Thus, $r_{m}(A)$ is a sum of $2m$ shifted inverses of $A$. By a well known result (see, e.g.,~\cite{VandeVanBarelMastro2008}), 
the off-diagonal blocks of each summand $B = A-\mu_i I$ satisfy $\rank B^{-1}|_{\off} = \rank B |_{\off}$. Noting that $\rank B |_{\off}=b$, because $B$ has bandwidth $b$, this completes the proof for simple poles. The result extends to non-simple poles by the semi-continuity of the rank function.
\end{proof}

\subsubsection{Singular value decay of off-diagonal blocks}

The results of Lemma~\ref{lemma:approxerror} and  Lemma~\ref{lemma:rational_ranks} allow us to establish exponential decay 
for the singular values of the off-diagonal blocks in $\Pi_{<0}(A)$ or, equivalently, in $\sign(A)$ for any symmetric banded matrix $A$. By rescaling $A$, we may assume without loss of generality that its 
spectrum is contained in $[-R, \hskip 3pt -1] \cup [1, \hskip 3pt R]$. We let $\sigma_i(\cdot)$ denote the $i$th largest singular value of a matrix.
\begin{theorem}
\label{sign_sing_value_decay}
Consider a symmetric $b$-banded matrix $A\in\mathbb{R}^{n\times n}$ with the eigenvalues contained in $[-R, \hskip 3pt -1] \cup [1, \hskip 3pt R]$, 
and $m \in \mathbb{N}$. Letting $\gap = 1/R$, the singular values of any off-diagonal block $\Pi_{<0}(A)\vert_{\off}$ satisfy
\begin{equation*}
 \sigma_{2mb+1}(\Pi_{<0}(A)\vert_{\off}) \leq 2\exp\bigg(-\frac{\pi^2m}{ 4\log (4/\sqrt[4]{\gap} + 2)}\bigg).
\end{equation*}
\end{theorem}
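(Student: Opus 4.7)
The proof is the natural synthesis of the two preceding lemmas with a Weyl/Eckart--Young type argument. My plan is as follows.

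First, I would reduce the statement about $\Pi_{<0}(A)$ to one about $\sign(A)$. Since $\Pi_{<0}(A) = \tfrac{1}{2}(I - \sign(A))$ and the identity contributes nothing to any strictly off-diagonal block in the HODLR partition, we have $\Pi_{<0}(A)\vert_{\off} = -\tfrac{1}{2}\sign(A)\vert_{\off}$, so $\sigma_j(\Pi_{<0}(A)\vert_{\off}) = \tfrac{1}{2}\sigma_j(\sign(A)\vert_{\off})$ for every $j$. It therefore suffices to bound the singular values of the off-diagonal blocks of $\sign(A)$ by $4\exp(-\pi^2 m/(4\log(4/\sqrt[4]{\gap}+2)))$, i.e.\ by $E_m$ from Lemma~\ref{lemma:approxerror}.

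Next, I would introduce the Zolotarev rational $s_m \in \mathcal{R}_{2m-1,2m}$ of Section~\ref{sec:rational_approx} as a near-best approximant of $\sign$ on $[-R,-1]\cup[1,R]$. Its poles lie on the imaginary axis (they are $\pm i\sqrt{c_{2i-1}}$ from the coefficients~\eqref{eq:zolo_coeff}) and hence are disjoint from the real spectrum of $A$, so $s_m(A)$ is well defined. Since $A$ is symmetric with eigenvalues in $[-R,-1]\cup[1,R]$, the spectral theorem gives
\begin{equation*}
\|\sign(A) - s_m(A)\|_2 \;=\; \max_{\lambda \in \Lambda(A)} |\sign(\lambda) - s_m(\lambda)| \;\leq\; E_m,
\end{equation*}
where the final bound comes from Lemma~\ref{lemma:approxerror}. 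Restricting to any off-diagonal block $\tau$ can only decrease the spectral norm, so
$$\|\sign(A)\vert_{\off} - s_m(A)\vert_{\tau}\|_2 \leq E_m.$$

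Finally, I would invoke Lemma~\ref{lemma:rational_ranks} to conclude that $s_m(A)\vert_{\tau}$ has rank at most $2mb$, so it is an admissible low-rank competitor in the Eckart--Young--Mirsky theorem applied to $\sign(A)\vert_{\off}$. This yields
\begin{equation*}
\sigma_{2mb+1}(\sign(A)\vert_{\off}) \;\leq\; \|\sign(A)\vert_{\off} - s_m(A)\vert_{\tau}\|_2 \;\leq\; E_m,
\end{equation*}
and combining with the factor $\tfrac{1}{2}$ from the first step gives exactly the claimed bound $\sigma_{2mb+1}(\Pi_{<0}(A)\vert_{\off}) \leq 2\exp\!\bigl(-\pi^2 m/(4\log(4/\sqrt[4]{\gap}+2))\bigr)$.

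There is no real obstacle here; the argument is essentially an assembly of Lemmas~\ref{lemma:approxerror} and~\ref{lemma:rational_ranks}. The only point requiring mild care is checking that the Zolotarev poles are disjoint from $\Lambda(A)$ so that Lemma~\ref{lemma:rational_ranks} is applicable, and noting that passing from $\|\cdot\|_2$ on the full matrix to a submatrix is monotone, which justifies the Eckart--Young step on the block $\tau$ without losing constants.
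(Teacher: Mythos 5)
Your proposal is correct and follows essentially the same route as the paper: apply Eckart--Young to the off-diagonal block using the Zolotarev approximant $s_m(A)\vert_{\off}$ (rank at most $2mb$ by Lemma~\ref{lemma:rational_ranks}), bound the error via Lemma~\ref{lemma:approxerror}, and pass from $\sign(A)$ to $\Pi_{<0}(A)$ with the factor $\tfrac{1}{2}$. Your added remarks on the imaginary poles of $s_m$ and the monotonicity of the spectral norm under taking submatrices are details the paper leaves implicit, but they do not change the argument.
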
 
\begin{proof}
Let $s_{m}$ denote the solution of the min-max problem~\eqref{eq:min_max_rational}.
Because
$s_{m}(A)\vert_{\off}$ has rank at most $2mb$ by Lemma~\ref{lemma:rational_ranks}, and the best rank-$i$ approximation error is governed by the $(i+1)$th largest singular value, it follows 
from~\eqref{eq:upper_bound_simplified} that
\begin{align*}
\sigma_{2mb+1}(\sign(A)\vert_{\off}) &\le \Vert \sign(A) - s_{m}(A) \Vert_2  \leq \underset{x \in [-R,-1]\cup [1,R]}\max \vert \sign(x) - s_{m}(x)\vert \notag \\
  &\leq 4\exp\bigg(-\frac{\pi^2m}{ 4\log (4/\sqrt[4]{\gap} + 2)}\bigg)\text{.}
\end{align*}
The statement thus follows from the relation $\Pi_{<0}(A)\vert_{\off} = -\frac{1}{2}\sign(A)\vert_{\off}$.
\end{proof}

\subsubsection{Memory requirements with respect to gap}

Theorem~\ref{sign_sing_value_decay} allows us to study the memory required to approximate $\Pi_{<0}(A)$ in the HODLR format to a prescribed accuracy. For this purpose, 
let $\Pi^{\h}$ denote the best approximation in the Frobenius norm of $\Pi_{<0}(A)$ in the HODLR format with all off-diagonal ranks bounded by $2mb$. 
Necessarily, the diagonal blocks of $\Pi^{\h}$ and $\Pi_{<0}(A)$ are the same. For an off-diagonal block of size $k$, Theorem~\ref{sign_sing_value_decay} implies
\begin{align*}
 \Vert \Pi_{<0}(A)|_{\off} - \Pi^{\h}|_{\off} \Vert_F^2 & = \sum_{i = 2mb+1}^k \sigma_{i}(\Pi_{<0}(A)\vert_{\off})^2 \le 
 \sum_{j = m}^{\lceil k/2b \rceil -m } 2 b\, \sigma_{2j b+1}(\Pi_{<0}(A)\vert_{\off})^2 \\
 &\le 8b \sum_{j = m}^{\lceil k/2b \rceil -m } \tau^{2j} \le \frac{8b}{1-\tau^2} \tau^{2m},
\end{align*}
with $\tau = \exp\Big(-\frac{\pi^2}{ 4\log (4/\sqrt[4]{\gap} + 2)}\Big)$. Taking into account the total number of off-diagonal blocks, we arrive at
\[
 \Vert \Pi_{<0}(A) - \Pi^{\h} \Vert_F^2 \le \frac{16 b}{1-\tau^2} (n/n_{\min} - 1) \tau^{2m}
\]
Thus, the value of $m$ needed to attain $\Vert \Pi_{<0}(A) - \Pi^{\h} \Vert_F\le \delta$ for a 
desired accuracy $\delta > 0$ satisfies $m = \mathcal{O}\big( \vert\log \gap \vert \cdot \log\big(bn \delta^{-1} \vert\log \gap \vert\big) \big)$. 

The corresponding approximation $\Pi^{\h}$ requires
\begin{equation}
\label{eq:memory_sproj_rational}
\mathcal{O}\Big(\vert\log \gap \vert \cdot \log\big(bn \delta^{-1} \vert\log \gap \vert\big) bn \log n \Big)
\end{equation}
memory. Up to a double logarithmic factor, this shows that the memory depends logarithmically on the spectral gap.  

\subsubsection{Comparison to approximate sparsity}

We now compare~\eqref{eq:memory_sproj_rational} with known results for approximate sparsity.
Assuming we are in the setting of Theorem~\ref{sign_sing_value_decay}, it is shown in~\cite{BenBoiRaz2013} that the off-diagonal entries of $\Pi_{<0}(A)$ satisfy
$$\vert (\Pi_{<0}(A))_{ij} \vert \leq C \e^{-\alpha \vert i-j \vert}, \qquad \alpha = \frac{1}{2b} \log\left(\frac{1+\gap}{1-\gap}\right),$$
for some constant $C>0$ depending only on $R$.

Let $\Pi^{(m)}$ denote the best approximation in the Frobenius norm to $\Pi_{<0}(A)$ by a matrix of bandwidth $m$. Following~\cite[Theorem 7.7]{BenBoiRaz2013}, we obtain
\[
 \Vert \Pi_{<0}(A) - \Pi^{(m)} \Vert_{F} \leq \frac{C}{\sqrt{\alpha}}\sqrt{n} \e^{-\alpha m}\text{.}
\]

Choosing a value of $m$ that satisfies $m = \mathcal{O}\big( b \gap^{-1} \log\big(C bn \delta^{-1} \gap^{-1}\big) \big)$ thus ensures 
an accuracy of $\delta > 0$, where we used $\alpha \approx \gap / b$.
Since the storage of $\Pi^{(m)}$ requires $\mathcal{O} (mn)$ memory, we arrive at 
\begin{equation} 
\label{eq:memory_sproj_poly}
\mathcal{O}\left(\frac{1}{\gap}  \log\big(C bn \delta^{-1} \gap^{-1}\big)  bn\right)
\end{equation}
memory. In contrast to the logarithmic dependence in~\eqref{eq:memory_sproj_rational}, the spectral gap now enters the asymptotic complexity inversely proportional.
On the other hand,~\eqref{eq:memory_sproj_rational} features a factor $\log n$ that is not present in~\eqref{eq:memory_sproj_poly}.
For most situations of practical interest, we expect that the much milder dependence on the gap far outweighs this additional factor. In summary, 
the comparison between~\eqref{eq:memory_sproj_rational} and~\eqref{eq:memory_sproj_poly} provides strong theoretical justification for favoring the HODLR format over approximate sparsity.

\section{QR-based first iteration of QDWH}
\label{qr-based-iteration}

The first QR-based iteration of the QDWH algorithm requires computing the QR decomposition
\begin{equation} \label{eq:qrdecomp}
 \begin{bmatrix}
  c A \\
  I
 \end{bmatrix}  =  
 \begin{bmatrix}
  Q_1 \\
  Q_2
 \end{bmatrix}R 
\end{equation}
for some scalar $c>0$. Without loss of generality, we suppose that $c = 1$. In this section, we develop an algorithm that requires $\mathcal O(b^2 n)$ operations for 
performing this decomposition when $A$ is a $b$--banded matrix. In particular, our algorithm directly computes $Q_1$ and $Q_2$ in the HODLR format. 
Since it is significantly simpler, we first discuss the case of a tridiagonal matrix $A$ before treating the case of general $b$.

It is interesting to note that the need for computing a QR decomposition of the form~\eqref{eq:qrdecomp} also arises in the solution of ill-posed inverse problems with 
Tikhonov regularization; see, e.g.,~\cite{Bjorck1996}. However, when solving ill-posed problems, usually only the 
computation of the upper-triangular factor $R$ is required, while the QDWH algorithm requires the computation of the orthogonal factor.

\subsection[QR decomposition for tridiagonal A]{QR decomposition of $\big[ {A \atop I} \big]$ for tridiagonal $A$}

\label{qr_tridiag}

For the case of a bidiagonal matrix $A$, Eld{\'e}n~\cite{Elden1977} proposed a fast algorithm for reducing a matrix $\big[ {A \atop I} \big]$ to upper triangular form. In the following, we 
propose a modification of Eld{\'e}n's algorithm suitable for tridiagonal $A$.
\begin{figure}[ht!]
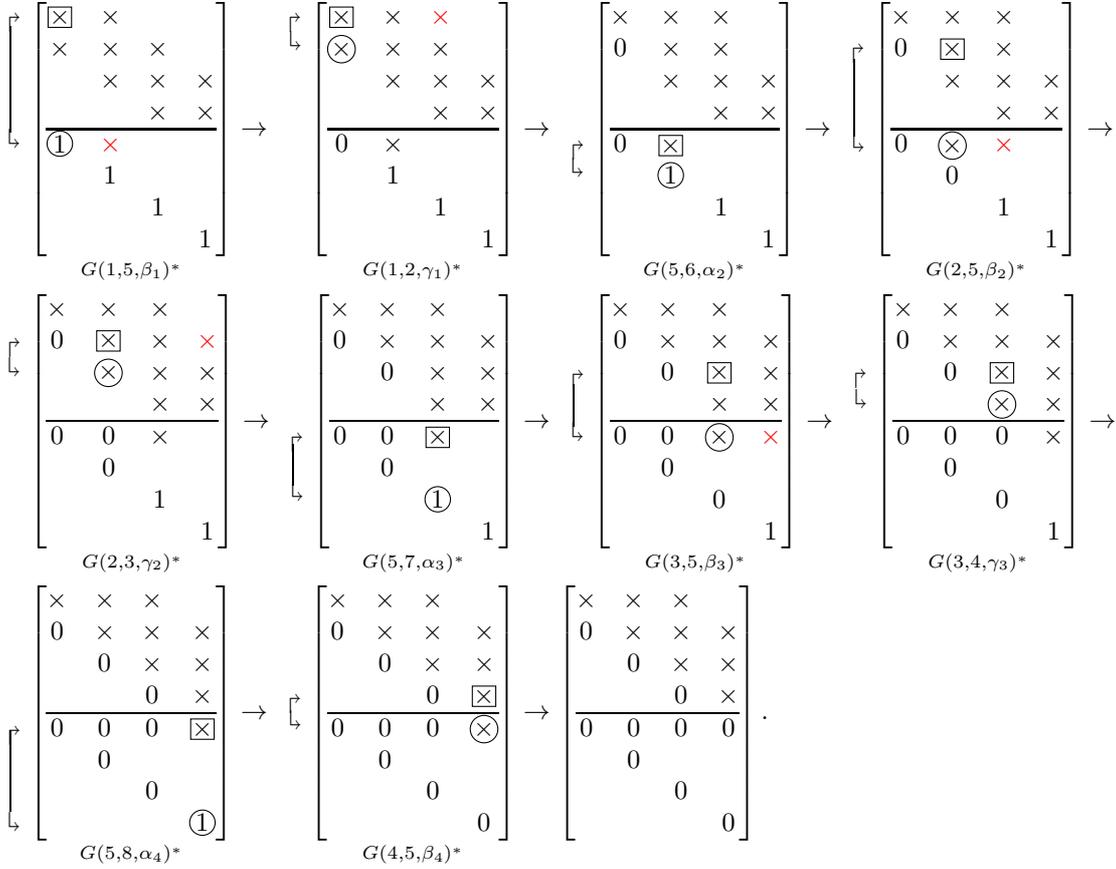
 
\begin{align*}
 &\begin{array}{c@{\hspace{1mm}}}
   \Rc\\[-0.4cm]
       \\
  \clr \\
  \clr \\ 
  \clr \\
  \rc \\
      \\
      \\   
      \\ 
 \end{array}
 \underset{G(1,5,\beta_1)^{*}}{
 \begin{lbmatrix}{4}
 \circledr{$\times$} &\times \\
 \times &\times &\times\\
 &\times &\times &\times \\
 & &\times &\times\\ \hline
\circled{1}  &\textcolor{red}{\times} & &\\
 &1 & &\\
 & &1 &\\
  & & &1\\
\end{lbmatrix}}\rightarrow 
 \begin{array}{c@{\hspace{1mm}}}
   \Rc  \\
   \rc  \\[-0.4cm]
        \\
        \\ 
        \\
        \\
        \\
        \\   
        \\ 
 \end{array}
\underset{G(1,2,\gamma_1)^{*}}{\begin{lbmatrix}{4}
\circledr{$\times$} &\times &\textcolor{red}{\times} \\
\circledt{$\times$} &\times &\times\\
 &\times &\times &\times \\
 & &\times &\times\\ \hline
0  &\times & &\\
 &1 & &\\
 & &1 &\\
  & & &1\\
\end{lbmatrix}}\rightarrow  
\begin{array}{c@{\hspace{1mm}}}
        \\
        \\
        \\
        \\ 
  \Rc   \\[-0.01cm]
  \rc   \\[-0.4cm]
        \\
        \\   
        \\ 
 \end{array}
\underset{G(5,6,\alpha_2)^{*}}{\begin{lbmatrix}{4}
\times &\times  &\times\\
0 &\times &\times\\
 &\times &\times &\times \\
 & &\times &\times\\ \hline
0  &\circledr{$\times$} & &\\
 &\circled{1} & &\\
 & &1 &\\
  & & &1\\
\end{lbmatrix}} \rightarrow  
 \begin{array}{c@{\hspace{1mm}}}
        \\
  \Rc   \\[0.05cm]
  \clr  \\ [0.05cm]
   \clr \\ 
  \rc   \\[-0.05cm]
        \\
        \\
        \\    
 \end{array}
\underset{G(2,5,\beta_2)^{*}}{\begin{lbmatrix}{4}
\times &\times  &\times\\
0 &\circledr{$\times$} &\times\\
 &\times &\times &\times \\
 & &\times &\times\\ \hline
0  &\circledt{$\times$} &\textcolor{red}{\times} &\\
 &0 & &\\
 & &1 &\\
  & & &1\\
\end{lbmatrix} }\rightarrow \\
 &\begin{array}{c@{\hspace{1mm}}}
        \\
   \Rc  \\[0.01cm]
   \rc  \\[-0.02cm]
        \\ 
        \\
        \\
        \\
        \\    
 \end{array}
\underset{G(2,3,\gamma_2)^{*}}{\begin{lbmatrix}{4}
\times &\times  &\times\\
0 &\circledr{$\times$} &\times &\textcolor{red}{\times}\\
 &\circledt{$\times$} &\times &\times \\
 & &\times &\times\\ \hline
0  &0 &\times &\\
 &0 & &\\
 & &1 &\\
  & & &1\\
\end{lbmatrix}} \rightarrow 
\begin{array}{c@{\hspace{1mm}}}
        \\
        \\
        \\
        \\ 
  \Rc   \\[-0.01cm]
  \clr  \\
  \rc   \\[-0.4cm]
        \\   
        \\ 
 \end{array}
\underset{G(5,7,\alpha_3)^{*}}{\begin{lbmatrix}{4}
\times &\times  &\times\\
0 &\times &\times &\times\\
 &0 &\times &\times \\
 & &\times &\times\\ \hline
0  &0 &\circledr{$\times$}&\\
 &0 & &\\
 & &\circled{1} &\\
  & & &1\\
\end{lbmatrix}}\rightarrow 
\begin{array}{c@{\hspace{1mm}}}
        \\
        \\
  \Rc   \\ [-0.4cm]
        \\ 
  \clr  \\ 
  \rc   \\[-0.05cm]
        \\
        \\
        \\
\end{array}
\underset{G(3,5,\beta_3)^{*}}{
\begin{lbmatrix}{4}
\times &\times  &\times\\
0 &\times &\times &\times\\
 &0 &\circledr{$\times$} &\times \\
 & &\times &\times\\ \hline
0  &0 &\circledt{$\times$} &\textcolor{red}{\times}\\
 &0 & &\\
 & &0 &\\
  & & &1\\
\end{lbmatrix}}\rightarrow 
\begin{array}{c@{\hspace{1mm}}}
        \\
        \\
  \Rc   \\ [-0.4cm]
        \\ 
   \rc  \\ 
        \\
        \\
        \\
        \\
\end{array}
\underset{G(3,4,\gamma_3)^{*}}{\begin{lbmatrix}{4}
\times &\times  &\times\\
0 &\times &\times &\times\\
 &0 &\circledr{$\times$} &\times \\
 & &\circledt{$\times$} &\times\\ \hline
0  &0 &0 &\times\\
 &0 & &\\
 & &0 &\\
  & & &1\\
\end{lbmatrix}}\rightarrow\\ 
&\begin{array}{c@{\hspace{1mm}}}
        \\
        \\
        \\
        \\ 
  \Rc   \\[-0.1cm]
  \clr  \\[0.1cm]
  \clr  \\ [0.05cm]
  \rc   \\[-0.5cm]
        \\   
 \end{array}
\underset{G(5,8,\alpha_4)^{*}}{\begin{lbmatrix}{4}
\times &\times  &\times\\
0 &\times &\times &\times\\
 &0 &\times &\times \\
 & &0 &\times\\ \hline
0  &0 &0 &\circledr{$\times$}\\
 &0 & &\\
 & &0 &\\
  & & &\circled{1}\\
\end{lbmatrix}}\rightarrow 
\begin{array}{c@{\hspace{1mm}}}
        \\
        \\
        \\ 
   \Rc  \\ [-0.45cm]
        \\ 
  \rc   \\[-0.01cm]
        \\
        \\
        \\
\end{array}
\underset{G(4,5,\beta_4)^{*}}{\begin{lbmatrix}{4}
\times &\times  &\times\\
0 &\times &\times &\times\\
 &0 &\times &\times \\
 & &0 &\circledr{$\times$}\\ \hline
0  &0 &0 &\circledt{$\times$}\\
 &0 & &\\
 & &0 &\\
  & & &0\\
\end{lbmatrix}}\rightarrow 
\begin{lbmatrix}{4}
\times &\times  &\times\\
0 &\times &\times &\times\\
 &0 &\times &\times \\
 & &0 &\times\\ \hline
0  &0 &0 &0\\
 &0 & &\\
 & &0 &\\
  & & &0\\
 \end{lbmatrix} \text{.}
\end{align*}
\caption{Fast QR decomposition of $\big[ {A \atop I} \big]$ for tridiagonal $A$ and $n = 4$. In each step, a Givens rotation is applied to the rows denoted by the arrows. Crosses denote generically nonzero elements, boxed/circled crosses are used to define Givens rotations, while 
red crosses denote the fill-in during the current operation.}
\label{fig:qr_example}
\end{figure}

Our proposed algorithm is probably best understood from the illustration in Figure~\ref{fig:qr_example} for $n = 4$.  In the $i$th step of the algorithm, all subdiagonal elements in 
the $i$th column of $\big[ {A \atop I} \big]$ are annihilated by performing Givens rotations either with the diagonal element, or with the element $(n+1,i)$. By carefully choosing the order of annihilation, 
only one new nonzero subdiagonal element is created in column $i+1$. The detailed pseudocode of this procedure is provided in Algorithm~\ref{alg:alg1}. We use $G(i,j,\alpha)$ to denote a Givens rotation of angle $\alpha$ that is applied to rows/columns $i$ and $j$.

\begin{algorithm}[ht!]
    \caption{\text{Fast QR decomposition~\eqref{eq:qrdecomp} for tridiagonal $A$}}
    \label{alg:alg1}
    \renewcommand{\algorithmicrequire}{\textbf{Input:}}
    \renewcommand{\algorithmicensure}{\textbf{Output:}}
   \begin{algorithmic}[1]
     
   \REQUIRE Tridiagonal matrix $A$.
   \ENSURE Factors $Q,R$ of a QR decomposition of $\big[ {A \atop I} \big]$. 
   
   \STATE $Q \gets I_{2n}, R \gets \big[ {A \atop I} \big]$.

   \STATE Construct $G(1,n+1,\beta_1)$ to annihilate $R(n+1,1)$.
   \STATE Update $R \gets G(1,n+1,\beta_1)^{*}R$ and $Q \gets QG(1,n+1,\beta_1)$

   \STATE Construct $G(1,2,\gamma_1)$ to annihilate $R(2,1)$.
   \STATE Update $R \gets  G(1,2,\gamma_1)^{*}R$ and $Q \gets QG(1,2,\gamma_1)$.
      
   \vskip 1pt
   \FOR{$i = 2,\ldots,n$} 
        \STATE Construct $G(n+1,n+i,\alpha_i)$ to annihilate $R(n+i,i)$.  
        \STATE Update $R \gets G(n+1,n+i,\alpha_i)^{*}R$ and $Q \gets QG(n+1,n+i,\alpha_i)$. \label{alpha}
      
        \STATE Construct $G(i,n+1,\beta_i)$ to annihilate $R(n+1,i)$.
        \STATE Update $R \gets G(i,n+1,\beta_i)^{*}R$ and $Q \gets QG(i,n+1,\beta_i)$.  
      
        \IF{$i<n$}
        \STATE Construct $G(i,i+1,\gamma_i)$ to annihilate $R(i+1,i)$.
        \STATE Update $R \gets G(i,i+1,\gamma_i)^{*}R$ and $Q \gets QG(i,i+1,\gamma_i)$. 
      \ENDIF
   \ENDFOR

 \end{algorithmic}
\end{algorithm}

Algorithm~\ref{alg:alg1} performs $3n-2$ Givens rotations in total. By exploiting its sparsity in a straightforward manner, only 
$\mathcal{O}(n)$ operations and memory are required to compute the upper triangular factor $R$. 
The situation is more complicated for the orthogonal factor. Since $Q$ is dense, it would require $\mathcal{O}(n^2)$ operations and memory to form $Q$ using Algorithm~\ref{alg:alg1}. 
In the following section, we explain how the low-rank structure of $Q$ can be exploited to reduce this cost to $\mathcal{O}(n)$ as well.

\subsubsection{Ranks of off-diagonal blocks and fast computation of orthogonal factor}

For our purposes, it suffices to compute the first $n$ columns of the $2n\times 2n$ matrix $Q$, that is, the $n\times n$ matrices $Q_1 = Q(1:n,1:n)$ and $Q_2 = Q(n+1:2n,1:n)$.
The order of Givens rotations in Algorithm~\ref{alg:alg1} implies that $Q_1$ is an upper Hessenberg matrix while $Q_2$ is an upper triangular matrix.
The following theorem shows that all off-diagonal blocks of $Q_1,Q_2$ have rank at most two.

\begin{theorem}
\label{thm:ranks_tridiag}
For the orthogonal factor $Q$ returned by Algorithm~\ref{alg:alg1}, it holds that the matrices $Q(1:k,k+1:n)$ and $Q(n+1:n+k,k+1:n)$ have rank at most two for all $1 \leq k < n$.
\end{theorem}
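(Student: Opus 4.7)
The plan is to use the factorizations $Q_1 R = A$ and $Q_2 R = I$ implicit in the QR relation $\smb A \\ I \sme = \smb Q_1 \\ Q_2 \sme R$, which identify $Q_2 = R^{-1}$ and $Q_1 = A R^{-1}$. The first ingredient is a structural statement about the factor $R$ produced by Algorithm~\ref{alg:alg1}: it is upper triangular with upper bandwidth exactly two. Establishing this is the main obstacle of the proof; one tracks fill-in through the three rotations $\alpha_i$, $\beta_i$, $\gamma_i$ of iteration $i$ and verifies inductively that (a) after iteration $i$ row $i$ takes its final form with support confined to columns $\{i, i{+}1, i{+}2\}$, (b) at the start of iteration $i$ the auxiliary row $n{+}1$ is supported only on columns $\{i, i{+}1\}$ so that $\alpha_i$ creates no fill-in in the upper block, and (c) $\beta_i$ and $\gamma_i$ at worst introduce the single entry at position $(i, i{+}2)$. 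This is essentially a formalization of the fill-in pattern displayed in Figure~\ref{fig:qr_example}.

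Granted this bandwidth claim, partition $R = \smb R_{11} & R_{12} \\ 0 & R_{22} \sme$ with $R_{11} \in \R^{k\times k}$ and $R_{22} \in \R^{(n-k)\times(n-k)}$. The bandwidth bound forces $R_{12}$ to have nonzeros only at positions $(k{-}1,1)$, $(k,1)$, $(k,2)$, so it factors as $R_{12} = r_1 e_1^* + r_2 e_2^*$ with $r_1 := R(1{:}k, k{+}1)$ and $r_2 := R(1{:}k, k{+}2)$. Setting $w_j^* := e_j^* R_{22}^{-1}$, the block-inversion identity for upper block-triangular matrices yields
\[
 Q(n{+}1{:}n{+}k,\; k{+}1{:}n) \;=\; R^{-1}(1{:}k, k{+}1{:}n) \;=\; -R_{11}^{-1} r_1\, w_1^* \;-\; R_{11}^{-1} r_2\, w_2^*,
\]
manifestly a sum of two rank-one matrices, which settles the claim for the lower block.

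For the upper block, tridiagonality of $A$ gives $A(1{:}k, k{+}1{:}n) = A(k, k{+}1)\, e_k e_1^*$, so writing $Q_1 = A R^{-1}$ and splitting $A(1{:}k, :)$ into its $k \times k$ and $k \times (n{-}k)$ pieces yields
\[
 Q(1{:}k,\; k{+}1{:}n) \;=\; \bigl[-A(1{:}k, 1{:}k)\, R_{11}^{-1} r_1 + A(k, k{+}1)\, e_k\bigr]\, w_1^* \;-\; A(1{:}k, 1{:}k)\, R_{11}^{-1} r_2\, w_2^*.
\]
A naive count suggests three rank-one contributions, but the key observation is that the extra term $A(k, k{+}1)\, e_k e_1^* R_{22}^{-1}$ coming from the tridiagonal tail of $A$ points along the same row direction $w_1^*$ as the piece generated by $r_1$; the three candidate rank-one pieces therefore collapse into two, giving the desired bound. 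The rest of the proof is routine block-matrix algebra once the upper bandwidth of $R$ is in hand.
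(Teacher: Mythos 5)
Your proof is correct, but it takes a genuinely different route from the paper's. The paper argues directly on $Q$: after step $k$ it fixes the (at most) two-dimensional subspace $\mathcal{U} = \myspan\{Q(1{:}k,k{+}1),\, Q(1{:}k,n{+}1)\}$ and shows by induction over the remaining Givens rotations that the column span of $Q(1{:}k,k{+}1{:}n)$ never leaves $\mathcal{U}$ (and analogously for the lower block). You instead argue on $R$: the identities $Q_2 = R^{-1}$ and $Q_1 = AR^{-1}$ reduce everything to the claim that $R$ has upper bandwidth two, after which the rank bounds follow from block inversion of $\smb R_{11} & R_{12} \\ 0 & R_{22} \sme$ and the tridiagonality of $A$ --- including the collapse of the apparent third rank-one term onto the shared row direction $w_1^* = e_1^* R_{22}^{-1}$, which you correctly identify as the crux of the upper block. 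Both proofs ultimately rest on an induction over the fill-in pattern of Algorithm~\ref{alg:alg1}: yours on the sparsity of $R$, the paper's on the column spans of $Q$; you leave that induction at the level of a sketch, but the claimed pattern (row $i$ finalized with support $\{i,i{+}1,i{+}2\}$, auxiliary row $n{+}1$ carrying at most one extra column of fill-in) is the right one and matches Figure~\ref{fig:qr_example}. As for what each approach buys: yours is algebraically cleaner, gives closed-form rank-two generators ($R_{11}^{-1}r_j$ and $e_j^* R_{22}^{-1}$), and transfers mechanically to the banded case once the bandwidth of $R$ is tracked; the paper's invariant-subspace argument is what directly powers Remark~\ref{construct_off}, namely the $\mathcal{O}(\max\{p,s\})$ on-the-fly construction of the low-rank factors while the rotations are being applied, whereas extracting generators from your formulas would require additional (albeit cheap, banded) triangular solves with $R_{11}$ and $R_{22}$.
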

\begin{proof}
We only prove the result for $Q(1:k,k+1:n)$; the proof for $Q(n+1:n+k,k+1:n)$ is analogous.

During steps $1,\ldots,k-1$ of Algorithm~\ref{alg:alg1}, $Q(1:k,k+1:n)$ is not modified and remains zero. In step $k$ of Algorithm~\ref{alg:alg1}, column $k+1$ of $Q$ is modified, while $Q(1:k,k+2:n)$ remains zero. After step $k$ has been completed,
let us set 
\begin{equation}
 \label{eq:span_tridiag}
\mathcal{U} := \operatorname{span} \lbrace Q(1:k,k+1), Q(1:k,n+1)\rbrace \subset \R^k.
\end{equation}
By construction, $\myspan Q(1:k,k+1:n) \subset \calU$. In the following, we show by induction that this relation holds for all subsequent steps of Algorithm~\ref{alg:alg1}. 
Suppose that $\myspan Q(1:k,k+1:n) \subset \calU$ holds after $i$ steps for some $i$ with $k\le i \le n-1$. In step $i+1$, the following operations are performed:
\begin{enumerate}
\item  $G(n+1,n+i+1,\alpha_{i+1})$ is applied to columns $n+1$ and $n+i+1$ of $Q$. Because $Q(1:k, n+i+1)$ is zero before applying the rotation, this simply effects a rescaling of column $n+1$ and thus $Q(1:k, n+1)\in \mathcal{U}$ remains true.
\item  $G(i+1,n+1,\beta_{i+1})$ is applied to columns $i+1$ and $n+1$ of $Q$, which preserves $\myspan Q(1:k,k+1:n) \subset \calU$.
\item  If $i<n$, $G(i+1,i+2,\gamma_{i+1})$ is applied to columns $i+1$ and $i+2$ of $Q$, which again preserves $\myspan Q(1:k,k+1:n) \subset \calU$.
\end{enumerate}
After completion of the algorithm, the column span of $Q(1:k,k+1:n)$ is thus contained in a subspace of dimension at most two. This proves the statement of the theorem.
\end{proof}

\begin{remark}
\label{construct_off}
The proof of Theorem~\ref{thm:ranks_tridiag} can be turned into a procedure for directly computing low-rank representations for the off-diagonal blocks of $Q_1, Q_2$ in the HODLR format. Due to the structure of $Q_1$ and $Q_2$, 
all lower off-diagonal blocks have ranks $1$ and $0$ respectively, and the computation of their low-rank representations is straightforward. In the following, we therefore only discuss the computation of a low-rank representation for an upper off-diagonal $p \times s$ block
$Q_1|_{\off} = U_1V_1^{*} $ with $U_1\in\mathbb{R}^{p\times 2}$, $V_1\in\mathbb{R}^{s\times 2}$.

Let $r+1$ and $k+1$ denote the row and column in $Q_1$ that correspond to the first row and column of $Q_1|_{\off}$, respectively. The construction of 
$Q_1|_{\off}$ begins in step $k$ of Algorithm~\ref{alg:alg1}, because $Q_1|_{\off}$ is zero before step $k$. During step $k$ only the first column of $Q_1|_{\off}$ is affected
by $G(k,k+1,\gamma_k)$; it becomes a scalar multiple of $Q_1(r+1:r+p,k)$.

After step $k$ of Algorithm~\ref{alg:alg1} is completed, we set $U_1 = [Q_1|_{\off}(:,1), Q(r+1:r+p,n+1)]$, as in~\eqref{eq:span_tridiag}. The matrix $V_1$ stores the coefficients in the basis $U_1$ of the columns in $Q_1|_{\off}$. Initially, $V_1 = [e_1, \mathbf{0}]$ with the first unit vector $e_1$. As we also need
to update the basis coefficients of $Q(r+1:r+p,n+1)$, we actually consider the augmented matrix   
$V^{*} = \big[V_1^{*} \quad {0 \atop 1}\big]$. In all subsequent steps of Algorithm~\ref{alg:alg1}, 
we only apply Givens rotations to the corresponding columns of $V^{*}$. Note that the last column of $V^{*}$ is only rescaled, as it is always combined with a zero column. 

After completing step $k+s$ of Algorithm~\ref{alg:alg1},
$Q_1|_{\off}$ remains unchanged and we extract the factor $V_1$ from the first $s$ columns of $V$.

Using the described procedure, the overall complexity to compute a low rank representation of $Q_1|_{\off}$ is 
$\mathcal{O}(\max\lbrace p,s \rbrace)$.
The off-diagonal blocks of $Q_2$ are treated analogously. 
\end{remark}

The QDWH algorithm makes use of the matrix product $Q_1 Q_2^*$, see~\eqref{eq:qdwh_qr}. Theorem~\ref{thm:ranks_tridiag}, together with the 
upper Hessenberg/triangular structure, directly implies that the ranks of the lower and upper off-diagonal blocks of $Q_1 Q_2^*$ are bounded by three and two, respectively. In fact, 
the following theorem shows a slightly stronger result.
\begin{theorem}
\label{thm:rank_tridiag_product}
Every off-diagonal block of $Q_1 Q_2^{*}$ for the orthogonal factor returned by Algorithm~\ref{alg:alg1} has rank at most $2$.  
\end{theorem}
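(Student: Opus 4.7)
The plan is to reduce the statement to bounding the rank of two ``full strips'' $(Q_{1} Q_{2}^{*})(1{:}k, k{+}1{:}n)$ and $(Q_{1} Q_{2}^{*})(k{+}1{:}n, 1{:}k)$ for every $1 \le k < n$; since every off-diagonal block in the HODLR partition is a submatrix of one such strip, this suffices. I would handle the upper strip by direct manipulation of the QR factors and then attack the lower strip by exploiting the symmetry of $Q_{1} Q_{2}^{*}$.

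For the upper strip, the key point is that the thin QR factorization $\big[{A \atop I}\big] = \big[{Q_{1} \atop Q_{2}}\big] R$ forces $I = Q_{2} R$, so $Q_{2} = R^{-1}$ is upper triangular and $Q_{2}^{*}$ is lower triangular. Thus $Q_{2}^{*}(1{:}k, k{+}1{:}n) = 0$ and
\[
(Q_{1} Q_{2}^{*})(1{:}k, k{+}1{:}n) \;=\; Q_{1}(1{:}k, k{+}1{:}n)\, Q_{2}(k{+}1{:}n, k{+}1{:}n)^{*},
\]
whose rank is bounded by $\rank Q_{1}(1{:}k, k{+}1{:}n) \le 2$ by Theorem~\ref{thm:ranks_tridiag}.

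To transport this bound to the lower strip, I would show that $Q_{1} Q_{2}^{*}$ is symmetric. From $A = Q_{1} R$ and $I = Q_{2} R$ one reads off $Q_{1} = A Q_{2}$, and orthonormality of $\big[{Q_{1} \atop Q_{2}}\big]$ gives $R^{*} R = \big[{A \atop I}\big]^{*}\big[{A \atop I}\big] = A^{2} + I$ (using $A = A^{*}$). Therefore
\[
Q_{1} Q_{2}^{*} \;=\; A\, Q_{2} Q_{2}^{*} \;=\; A\,(R^{*} R)^{-1} \;=\; A\,(A^{2} + I)^{-1},
\]
which is symmetric because $A$ and $(A^{2}+I)^{-1}$ commute as functions of $A$. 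Transposing the upper-strip bound then gives rank at most $2$ for the lower strip, and restricting to any HODLR off-diagonal block completes the proof.

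The step I expect to be the main obstacle is precisely the lower strip: a direct mimic of the upper-strip computation picks up an additional subdiagonal row coming from the upper-Hessenberg structure of $Q_{1}$ and only yields the weaker bound $3$. Bypassing this loss seems to genuinely require the symmetry of $A(A^{2}+I)^{-1}$, so the argument crucially exploits that $A$ is symmetric and not merely banded.
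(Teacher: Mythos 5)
Your proof is correct, but it takes a genuinely different route from the paper's. The paper partitions $Q_1,Q_2$ according to their Hessenberg/triangular structure and Theorem~\ref{thm:ranks_tridiag}, observes that the lower strip $(Q_1Q_2^{*})(k{+}1{:}n,1{:}k)$ is a priori only a sum of a rank-$1$ and a rank-$2$ term, and then recovers the bound $2$ by tracking the Givens rotations to show that the two offending vectors ($Y_1(:,k)$ and $U_2(:,1)$ in the paper's notation) are collinear after step $k$ of Algorithm~\ref{alg:alg1}. You instead bypass the algorithmic bookkeeping entirely via the closed form $Q_1Q_2^{*}=AQ_2Q_2^{*}=A(R^{*}R)^{-1}=A(A^2+I)^{-1}$, whose symmetry makes the lower strip the transpose of the upper one, and the upper strip is handled exactly as you say (the identity $Q_2=R^{-1}$, hence upper triangular, plus Theorem~\ref{thm:ranks_tridiag}). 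Your argument is cleaner and has the added benefit of transferring verbatim to the banded case, yielding Theorem~\ref{thm:rank_band_product} with bound $2b$ from Theorem~\ref{thm:ranks_banded}; the paper only asserts that extension by analogy. The one thing the paper's proof buys that yours does not is independence from the symmetry of $A$: the collinearity argument is intrinsic to the rotation pattern of Algorithm~\ref{alg:alg1} and would survive for a nonsymmetric tridiagonal $A$, whereas your symmetry of $A(A^2+I)^{-1}$ genuinely needs $A=A^{*}$ --- which, as you note, always holds in the QDWH setting, so nothing is lost for the paper's purposes.
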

\begin{proof}
By Algorithm~\ref{alg:alg1} and Theorem~\ref{thm:ranks_tridiag}, the matrices $Q_1$ and $Q_2$ admit for any $1 \leq k < n$ a partitioning of the from
\begin{equation*}
Q_1 = \left[ \begin{array}{@{\,} c|c @{\,}}
  X_1 & U_1V_1^{*}\\      
  \hline
  \sigma e_1e_k^{*} & X_2 \\
\end{array} \right], \quad
Q_2 = \left[ \begin{array}{@{\,} c|c @{\,}}
  Y_1 & U_2V_2^{*}\\      
  \hline
  \phantom{e_1}\mathbf{0}\phantom{e} & Y_2 \\
\end{array} \right],
\end{equation*}
where $X_1\in \R^{k\times k}, X_2\in\R^{n-k\times n-k}$ are upper Hessenberg, $Y_1 \in \R^{k\times k}, Y_2\in\R^{n-k\times n-k}$ are upper triangular, $U_1, U_2 \in \R^{k\times 2}$, $V_1, V_2\in\R^{n-k\times 2}$, $\sigma \in\R$, and 
$e_1, e_k$ denote unit vectors of appropriate lengths. The upper off-diagonal block of $Q_1Q_2^{*}$ equals to a product of rank-$2$ matrices
\begin{align*}
(Q_1 Q_2^{*})(1:k, k+1:n) &= X_1 \cdot \mathbf{0} + U_1 \underbrace{V_1^{*} Y_2^{*}}_{\widetilde{V}_1^{*}} = U_1\widetilde{V}_1^{*}\text{.}
\end{align*}
Moreover, the lower off-diagonal block amounts to a sum of a rank-$1$ and a rank-$2$ matrix
\begin{align*}
(Q_1 Q_2^{*})(k+1:n, 1:k) &= \sigma e_1 e_k^{*} Y_1^{*} + \underbrace{X_2 V_2}_{ \widetilde{V}_2 } U_2^{*}\\
&= \sigma e_1 Y_1(:,k)^{*} + \widetilde{V}_2 U_2^{*}\text{.}
\end{align*}  
If $\sigma = 0$, the statement holds. Otherwise, we first show that the vectors $Y_1(:, k)$ and $U_2(:,1)$ are collinear. Let us recall that the vectors $Y_1(:,k)$, $U_2(:,1)$ coincide with the vectors
 $Q(n+1:n+k+1,k)$, $Q(n+1:n+k+1,k+1)$ computed during step $k$ of Algorithm~\ref{alg:alg1}. As $Q(n+1:n+k+1,k)$ and $Q(n+1:n+k+1,k+1)$ are collinear
after performing step $k$, the same holds for $Y_1(:,k)$, $U_2(:,1)$, and $Y_1(:,k) = \eta U_2(:,1)$ for some $\eta \in \mathbb{R}$. Hence, we obtain
\begin{align*}
(Q_1 Q_2^{*})(k+1:n, 1:k) &= \sigma \eta e_1 U_2(:,1)^{*} + \widetilde{V}_2 U_2^{*} = \widehat{V}_2 U_2^{*}\text{,}
\end{align*} 
which completes the proof. 
\end{proof}

From Theorem~\ref{thm:rank_tridiag_product} and the recurrence~\eqref{eq:qdwh_qr_reccur} it follows that
the first iterate of the QDWH algorithm can be exactly represented in the HODLR format with off-diagonal ranks at most $3$. 

\subsection[QR decomposition for banded A]{QR decomposition of $\big[ {A \atop I} \big]$ for banded $A$}

In this section, we discuss the QR decomposition of $\big[ {A \atop I} \big]$ for a banded symmetric matrix $A$ with bandwidth $b> 1$. Let us first note that
Eld{\'e}n~\cite{Elden1984} proposed a fast algorithm for reducing a matrix $\big[ {A \atop L} \big]$ with an upper triangular banded matrix $L$. 
Eld{\'e}n's algorithm does not cover the fast computation of the orthogonal factor and requires the application of $(2b+1)n + nb - \frac{5}{2}b^2 - \frac{3}{2}b$ Givens rotations. In the following, we propose a different algorithm that only requires $(2b+1)n - b^2 - b$ Givens rotations.
 
Figure~\ref{fig:gap_6_band_res} illustrates the idea of our algorithm for $n = 6$ and $b = 3$. In the $i$th step of the algorithm, the subdiagonal elements in the $i$th column of $\big[ {A \atop I} \big]$ are annihilated as follows. A first group of Givens rotations ($\alpha_{i,j}$) annihilates all elements in row $n+i$, which consists of the diagonal element of $I$ and fill-in from the previous step.  Then a Givens rotation ($\beta_{i}$) annihilates the element $(n+1,i)$. Finally, a second group of Givens rotations ($\gamma_{i,j}$) annihilates all subdiagonal elements of $A$.
The detailed procedure is given in Algorithm~\ref{alg:alg2}. 

\begin{figure}[!ht]
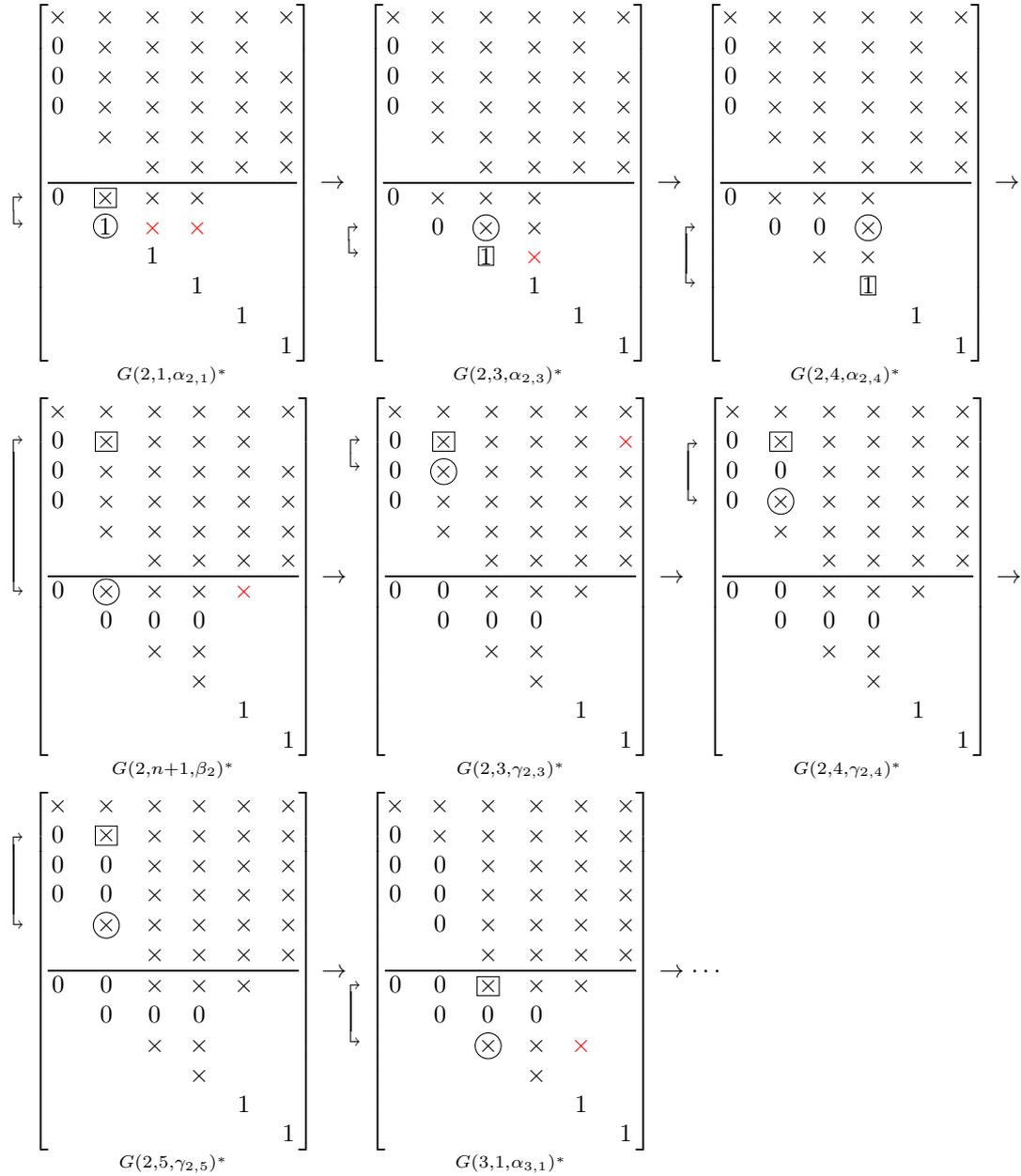

\begin{align*}
&\begin{array}{c@{\hspace{1mm}}}
        \\
        \\
        \\
        \\
        \\
  \Rc   \\[-0.01cm]
  \rc   \\[-0.4cm]
        \\
        \\   
        \\
        \\
 \end{array}
\underset{G(2,1,\alpha_{2,1})^{*}}{\begin{lbmatrix}{6}
\times &\times &\times &\times &\times &\times\\
0 &\times &\times &\times &\times & \\
0 &\times &\times &\times  &\times &\times \\
0 &\times &\times &\times &\times  &\times\\
  &\times &\times &\times &\times &\times \\ 
  & &\times &\times &\times &\times \\ \hline
0  &\circledr{$\times$} &\times &\times\\
 &\circled{1} &\textcolor{red}{\times} &\textcolor{red}{\times}\\
 & &1 \\
 & & &1 \\
 & & & &1 \\
 & & & & &1 \\
\end{lbmatrix}} \rightarrow \hskip -7pt
\begin{array}{c@{\hspace{1mm}}}
        \\
        \\
        \\
        \\
        \\
        \\
  \Rc   \\[-0.01cm]
  \rc   \\[-0.4cm]
        \\   
        \\
        \\
 \end{array}
\underset{G(2,3,\alpha_{2,3})^{*}}{\begin{lbmatrix}{6}
\times &\times &\times &\times &\times &\times\\
0 &\times &\times &\times &\times & \\
0 &\times &\times &\times  &\times &\times \\
0 &\times &\times &\times &\times  &\times\\
  &\times &\times &\times &\times &\times \\ 
  & &\times &\times &\times &\times \\ \hline
0  &\times &\times &\times\\
 &0 &\circledt{$\times$} &\times\\
  & &\circledr{1}  &\textcolor{red}{\times}\\
 & & &1 \\
 & & & &1 \\
 & & & & &1 \\
\end{lbmatrix}} \rightarrow \hskip -7pt
\begin{array}{c@{\hspace{1mm}}}
        \\
        \\
        \\
        \\
        \\
        \\
  \Rc   \\[-0.01cm]
  \clr  \\
  \rc   \\[-0.4cm]
        \\
        \\   
 \end{array}
\underset{G(2,4,\alpha_{2,4})^{*}}{\begin{lbmatrix}{6}
\times &\times &\times &\times &\times &\times\\
0 &\times &\times &\times &\times & \\
0 &\times &\times &\times  &\times &\times \\
0 &\times &\times &\times &\times  &\times\\
  &\times &\times &\times &\times &\times \\ 
  & &\times &\times &\times &\times \\ \hline
0  &\times &\times &\times\\
 &0 &0 &\circledt{$\times$}\\
  & &\times &\times\\
 & & &\circledr{1}\\
 & & & &1 \\
 & & & & &1 \\
\end{lbmatrix}} \rightarrow \\
&\begin{array}{c@{\hspace{1mm}}}
   \Rc  \\[-0.35cm]
          \\
    \clr    \\
    \clr    \\
    \clr    \\
    \clr    \\
   \rc     \\
        \\
        \\
        \\
        \\   
 \end{array}
\underset{G(2,n+1,\beta_2)^{*}}{\begin{lbmatrix}{6}
\times &\times &\times &\times &\times &\times\\
0 &\circledr{$\times$} &\times &\times &\times & \\
0 &\times &\times &\times  &\times &\times \\
0 &\times &\times &\times &\times  &\times\\
  &\times &\times &\times &\times &\times \\ 
  & &\times &\times &\times &\times \\ \hline
0  &\circledt{$\times$} &\times &\times &\textcolor{red}{\times}\\
 &0 &0 &0\\
  & &\times &\times\\
 & & &\times\\
 & & & &1 \\
 & & & & &1 \\
\end{lbmatrix}} \rightarrow \hskip -7pt
\begin{array}{c@{\hspace{1mm}}}
    \Rc  \\[0.0cm]
    \rc  \\[-0.35cm]
          \\
           \\
           \\
          \\
          \\
        \\
        \\
        \\
        \\   
 \end{array}
\underset{G(2,3,\gamma_{2,3})^{*}}{\begin{lbmatrix}{6}
\times &\times &\times &\times &\times &\times\\
0 &\circledr{$\times$} &\times &\times &\times &\textcolor{red}{\times} \\
0 &\circledt{$\times$} &\times &\times  &\times &\times \\
0 &\times &\times &\times &\times  &\times\\
  &\times &\times &\times &\times &\times \\ 
  & &\times &\times &\times &\times \\ \hline
0  &0 &\times &\times &\times\\
 &0 &0 &0\\
  & &\times &\times\\
 & & &\times\\
 & & & &1 \\
 & & & & &1 \\
\end{lbmatrix}} \rightarrow \hskip -7pt
\begin{array}{c@{\hspace{1mm}}}
      \Rc  \\[0.0cm]
      \clr  \\
      \rc  \\[-0.45cm]
          \\
           \\
          \\
          \\
        \\
        \\
        \\
        \\   
 \end{array}
\underset{G(2,4,\gamma_{2,4})^{*}}{\begin{lbmatrix}{6}
\times &\times &\times &\times &\times &\times\\
0 &\circledr{$\times$} &\times &\times &\times &\times\\
0 &0 &\times &\times  &\times &\times \\
0 &\circledt{$\times$} &\times &\times &\times  &\times\\
  &\times &\times &\times &\times &\times \\ 
  & &\times &\times &\times &\times \\ \hline
0  &0 &\times &\times &\times\\
 &0 &0 &0\\
  & &\times &\times\\
 & & &\times\\
 & & & &1 \\
 & & & & &1 \\
\end{lbmatrix}} \rightarrow \hskip -7pt\\
&\begin{array}{c@{\hspace{1mm}}}
      \Rc  \\[0.0cm]
      \clr  \\
      \clr  \\
      \rc  \\[-0.45cm]
          \\
          \\
          \\
          \\
          \\
          \\
          \\   
 \end{array}
\underset{G(2,5,\gamma_{2,5})^{*}}{\begin{lbmatrix}{6}
\times &\times &\times &\times &\times &\times\\
0 &\circledr{$\times$} &\times &\times &\times &\times\\
0 &0 &\times &\times  &\times &\times \\
0 &0 &\times &\times &\times  &\times\\
  &\circledt{$\times$} &\times &\times &\times &\times \\ 
  & &\times &\times &\times &\times \\ \hline
0  &0 &\times &\times &\times\\
 &0 &0 &0\\
  & &\times &\times\\
 & & &\times\\
 & & & &1 \\
 & & & & &1 \\
\end{lbmatrix}} \rightarrow \hskip -7pt
\begin{array}{c@{\hspace{1mm}}}
       \\
        \\
        \\
        \\
        \\
  \Rc   \\[-0.01cm]
   \clr \\
  \rc   \\[-0.4cm]
        \\
        \\   
        \\  
 \end{array}
\underset{G(3,1,\alpha_{3,1})^{*}}{\begin{lbmatrix}{6}
\times &\times &\times &\times &\times &\times\\
0 &\times &\times &\times &\times &\times\\
0 &0 &\times &\times  &\times &\times \\
0 &0 &\times &\times &\times  &\times\\
  &0 &\times &\times &\times &\times \\ 
  & &\times &\times &\times &\times \\ \hline
0  &0 &\circledr{$\times$} &\times &\times\\
 &0 &0 &0\\
  & &\circledt{$\times$} &\times &\textcolor{red}{\times}\\
 & & &\times\\
 & & & &1 \\
 & & & & &1 \\
\end{lbmatrix}} \rightarrow  \cdots
\end{align*}
\caption{Second step of fast QR decomposition (Algorithm~\ref{alg:alg2}) of $\big[ {A \atop I} \big]$ for banded $A$ with $n = 6$ and $b = 3$.
In each step, a Givens rotation is applied to the rows denoted by the arrows. Crosses denote generically nonzero elements, boxed/circled crosses are used to define Givens rotations, while 
red crosses denote the fill-in during the current operation.}
\label{fig:qr_example_banded}
\end{figure}

\begin{algorithm}[ht!]
    \caption{\text{Fast QR decomposition~\eqref{eq:qrdecomp} for banded $A$}}
    \label{alg:alg2}
    \renewcommand{\algorithmicrequire}{\textbf{Input:}}
    \renewcommand{\algorithmicensure}{\textbf{Output:}}

   \begin{algorithmic}[1]
    \REQUIRE Banded matrix $A$ with bandwidth $b$.
    \ENSURE Factors $Q,R$ of a QR decomposition of $\big[ {A \atop I} \big]$. 
   
   \STATE $Q \gets I_{2n}$, $R \gets \big[ {A \atop I} \big]$.

   \STATE Construct $G(1,n+1,\beta_1)$ to annihilate $R(n+1,1)$.
   \STATE Update $R \gets G(1,n+1,\beta_1)^{*}R$ and $Q \gets QG(1,n+1,\beta_1)$
   
    \FOR{$j = 2,\ldots,b+1$} 
     \STATE Construct $G(1,j,\gamma_{1,j})$ to annihilate $R(j,1)$.
     \STATE Update $R \gets  G(1,j,\gamma_{1,j})^{*}R$ and $Q \gets QG(1,j,\gamma_{1,j})$.
    \ENDFOR
      
   \vskip 1pt
   
     \FOR{$i = 2,\ldots,n$} 
      
        \STATE Construct $G(n+1,n+i,\alpha_{i,i})$ to annihilate $R(n+i,i)$.  
        \STATE Update $R \gets G(n+1,n+i,\alpha_{i,i})^{*}R$ and $Q \gets QG(n+1,n+i,\alpha_{i,i})$. 
       
       \vskip 1pt

       \FOR{$j = i+1,\ldots,\min \lbrace n,b+i-1\rbrace$} 
         \STATE Construct $G(n+i,n+j,\alpha_{i,j})$ to annihilate $R(n+i,j)$.  
        \STATE Update $R \gets G(n+i,n+j,\alpha_{i,j})^{*}R$ and $Q \gets QG(n+i,n+j,\alpha_{i,j})$.
       \ENDFOR

       \vskip 1pt
       \STATE Construct $G(i,n+1,\beta_i)$ to annihilate  $R(n+1,i)$.
       \STATE Update $R \gets G(i,n+1,\beta_i)^{*}R$ and $Q \gets QG(i,n+1,\beta_i)$. 
       
       \vskip 1pt
      
      \IF{$i<n$}
       \FOR{$j = i+1,\ldots,\min\lbrace n,b+i\rbrace$} 
          \STATE Construct $G(i,j,\gamma_{i,j})$ to annihilate $R(j,i)$.
          \STATE Update $R \gets G(i,j,\gamma_{i,j})^{*}R$ and $Q \gets QG(i,j,\gamma_{i,j})$.
          
       \ENDFOR
     \ENDIF
  \ENDFOR

 \end{algorithmic}
\end{algorithm}

\subsubsection{Ranks of off-diagonal blocks and fast computation of orthogonal factor}

Due to the order of annihilation in Algorithm~\ref{alg:alg2}, it follows that $Q_1 = Q(1:n,1:n)$ is a $b$-Hessenberg matrix (that is, the matrix is zero below the $b$th subdiagonal) while
$Q_2 = Q(n+1:2n,1:n)$ is an upper triangular matrix. 
The following result and its proof yield an $\calO(b^2 n)$ algorithm for computing $Q_1$ and $Q_2$, analogous to Theorem~\ref{thm:ranks_tridiag} and Remark~\ref{construct_off}. 

\begin{theorem}
\label{thm:ranks_banded}
For the orthogonal factor $Q$ returned by Algorithm~\ref{alg:alg2}, it holds that the matrices $Q(1:k,k+1:n)$ and $Q(n+1:n+k,k+1:n)$ have rank at most $2b$ for all $1 \leq k < n$.
\end{theorem}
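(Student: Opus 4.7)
My strategy is to adapt the inductive argument used in the proof of Theorem~\ref{thm:ranks_tridiag}: identify a fixed subspace $\mathcal{U}\subset\R^{k}$ of dimension at most $2b$ that contains $\myspan Q(1:k,k+1:n)$ at the end of Algorithm~\ref{alg:alg2}, and verify the containment by a step-by-step invariance argument. The analogous statement for $Q(n+1:n+k,k+1:n)$ follows by the same template once one uses that $Q_{2}$ is upper triangular.

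First I would single out a burn-in phase. For steps $i=1,\ldots,k-b$, every rotation $\gamma_{i,j}$ has index $j\le i+b\le k$, every $\beta_{i}$ acts on columns $i\le k$ and $n+1$, and every $\alpha_{i,\cdot}$ acts only on columns past~$n$; none touches the block $Q(1:k,k+1:n)$, which therefore remains zero through step $k-b$. During the ``activation'' phase $i=k-b+1,\ldots,k+b-1$ the rotations $\gamma_{i,j}$ with $j\in\{k+1,\ldots,k+b\}$ successively populate $Q(1:k,k+1),\ldots,Q(1:k,k+b)$ as linear combinations of the evolving pivot column $Q(1:k,i)$. Since column $k+j$ receives its last $\gamma$-update during step $k+j-1\le k+b-1$, these $b$ columns are frozen after step $k+b-1$.

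I would then take $\mathcal{U}$ to be the span of those $b$ frozen vectors $Q(1:k,k+j)$ for $j=1,\ldots,b$, together with $b$ further vectors recording the state of the ``side'' columns $Q(1:k,n+1),\ldots,Q(1:k,n+b)$ at the end of step $k+b-1$. By construction $\dim\mathcal{U}\le 2b$, and a direct check shows $\myspan Q(1:k,k+1:n)\subset\mathcal{U}$ at that moment. For each subsequent step $i\ge k+b$ the induction then mirrors the tridiagonal case through three subcases: each $\gamma_{i,j}$ mixes two columns of $Q(1:k,k+1:n)$ already in $\mathcal{U}$; $\beta_{i}$ mixes $Q(1:k,i)\in\mathcal{U}$ with $Q(1:k,n+1)\in\mathcal{U}$; and the $\alpha$-cascade touches only columns indexed beyond~$n$.

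The principal obstacle, and the main departure from the tridiagonal proof, is the last subcase. The $b$ rotations $\alpha_{i,i},\alpha_{i,i+1},\ldots,\alpha_{i,i+b-1}$ of step $i$ act on a sliding window of side columns $Q(1:k,n+i),\ldots,Q(1:k,n+i+b-1)$ that are in general nonzero due to earlier $\alpha$-cascades, so the clean ``rescaling'' argument of the tridiagonal case no longer applies. I would strengthen the inductive invariant to assert that at the start of every step $i\ge k+b$ the entire window together with $Q(1:k,n+1)$ already lies in $\mathcal{U}$, exploiting that each $Q(1:k,n+\ell)$ in the window was generated in earlier steps as a linear combination of past states of $Q(1:k,n+1)$ which are themselves captured by the side-generators of $\mathcal{U}$. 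With this strengthening the three-subcase induction closes and yields the rank bound $2b$.
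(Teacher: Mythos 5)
Your overall template --- a burn-in phase, a $2b$-dimensional subspace $\mathcal{U}$ fixed after a suitable step, and a three-subcase induction over the remaining steps --- is exactly the paper's strategy, but the execution has a genuine gap precisely at the point you flag as the ``principal obstacle.'' The side generators you place in $\mathcal{U}$, namely $Q(1:k,n+1),\ldots,Q(1:k,n+b)$ at the end of step $k+b-1$, are the wrong columns: for $k\ge 2$ the columns $n+2,\ldots,n+b$ have been frozen since step $b$ and never interact with the block $Q(1:k,k+1:n)$ again, whereas the columns that actually participate in the $\alpha$-cascade from step $k+b$ onwards are $n+1$ together with the sliding window $n+k+b,\ldots,n+k+2b-2$. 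Your proposed repair --- that each window column is a combination of ``past states of $Q(1:k,n+1)$'' which are ``captured by the side-generators'' --- does not hold for this $\mathcal{U}$: the states of $Q(1:k,n+1)$ at steps $k,k+1,\ldots,k+b-1$ are genuinely different vectors (each $\beta_i$ and $\alpha_{i,i}$ changes it), and there is no reason they should lie in the span of its single final state plus the frozen, irrelevant columns $n+2,\ldots,n+b$. Asserting that these past states span something of dimension $\mathcal{O}(b)$ is essentially the statement being proved, so the base case of your strengthened invariant is circular as written. A secondary slip: the columns $k+1,\ldots,k+b$ are not frozen after step $k+b-1$ --- column $k+j$ is still modified in step $k+j$ by $\beta_{k+j}$ and by the rotations $\gamma_{k+j,\cdot}$ for which it is the pivot --- although this alone would not sink the argument, since what the induction needs is containment in $\mathcal{U}$, not freezing.

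The paper sidesteps all of this by fixing $\mathcal{U}$ at the end of step $k$, where the set of nonzero columns is minimal: it takes as generators $Q(1:k,k+1),\ldots,Q(1:k,k+b)$, $Q(1:k,n+1)$, and the fill-in columns $Q(1:k,n+k+1),\ldots,Q(1:k,n+k+b-1)$ ($2b$ vectors in total), and then maintains the two inclusions $\myspan Q(1:k,k+1:n+1)\subset\mathcal{U}$ and $\myspan Q(1:k,n+k+1:2n)\subset\mathcal{U}$ simultaneously through the induction. Because the second inclusion covers the entire tail of identity-block columns, every $\alpha$-rotation of a later step mixes two columns both of which are already known to lie in $\mathcal{U}$; no appeal to ``past states'' is needed and the induction closes immediately. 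If you replace your generators by these and carry the tail inclusion along as part of the inductive hypothesis, your argument becomes the paper's proof.
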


\begin{proof}
Again, we prove the result for $Q(1:k,k+1:n)$ only.
After $k$ steps of Algorithm~\ref{alg:alg2} have been performed, we define the subspace
\begin{align*}
 \mathcal{U} := \myspan \lbrace & Q(1:k,k+1),\ldots,Q(1:k,k+b),Q(1:k,n+1),\\
 & Q(1:k,n+k+1),\ldots, Q(1:k,n+k+b-1) \rbrace,
\end{align*}
which is of dimension not larger than $2b$. At this point, the columns $Q(1:k,j)$ are zero for $j = k+b+1,\ldots,n$ and $j = n+k+b,\ldots,2n$. Thus,
\begin{equation} \label{eq:relationbanded}
\myspan Q(1:k,k+1:n+1) \subset \calU, \qquad \myspan Q(1:k, n+k+1:2n) \subset \calU
\end{equation}
hold after $k$ steps of Algorithm~\ref{alg:alg2}. We now show by induction that this relation holds for all subsequent steps.

Suppose that~\eqref{eq:relationbanded} holds after $i$ steps with $k\le i\le n-1$. In step $i+1$, the following 
operations are performed by Algorithm~\ref{alg:alg2}:

\begin{enumerate}
\item $G(n+1,n+i+1,\alpha_{i+1,i+1})$ is applied to columns $n+1$ and $n+i+1$ of $Q$, which affects and preserves both inclusions in~\eqref{eq:relationbanded}. Then 
$G(n+i+1,n+j,\alpha_{i+1,j})$ is applied to columns $n+i+1$ and $n+j$ of $Q$, for $j=i+2:\min \lbrace n,i+b\rbrace$, hence 
 $\myspan Q(1:k, n+k+1:2n) \subset \calU$ remains true.     
\item  $G(i+1,n+1,\beta_{i+1})$ is applied to columns $i+1$ and $n+1$ of $Q$, preserving  $\myspan Q(1:k,k+1:n+1) \subset \calU$.
\item If $i+1< n$, $G(i+1,j,\gamma_{i+1,j})$ is applied to columns $i+1$ and $j$ of $Q$, for $j = i+2:\min \lbrace n, i+b+1\rbrace$, which 
retains $\myspan Q(1:k,k+1:n+1) \subset \calU$.
\end{enumerate}
Therefore~\eqref{eq:relationbanded} holds after Algorithm~\ref{alg:alg2} has been completed, which completes the proof of the theorem.
\end{proof}

The following result is an extension of Theorem~\ref{thm:rank_band_product} from the tridiagonal to the banded case. Its proof is very similar and therefore omitted.

\begin{theorem}
\label{thm:rank_band_product}
Every off-diagonal block of $Q_1Q_2^{*}$ for the orthogonal factor returned by Algorithm~\ref{alg:alg2} has rank at most $2b$. 
\end{theorem}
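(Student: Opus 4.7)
I would follow the strategy of the proof of Theorem~\ref{thm:rank_tridiag_product}. Partitioning $Q_1$ and $Q_2$ at any splitting index $k$ as
\[
Q_1 = \smb X_1 & U_1 V_1^{*} \\ L_1 & X_2 \sme, \qquad Q_2 = \smb Y_1 & U_2 V_2^{*} \\ 0 & Y_2 \sme,
\]
Algorithm~\ref{alg:alg2} guarantees that $X_1, X_2$ are $b$-Hessenberg, $Y_1, Y_2$ upper triangular, and by Theorem~\ref{thm:ranks_banded} the matrices $U_i, V_i$ can be chosen with at most $2b$ columns each. The $b$-Hessenberg structure of $Q_1$ confines the nonzeros of $L_1$ to a staircase in rows $1, \ldots, b$ and columns $k-b+1, \ldots, k$, so $\rank L_1 \le b$.

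The upper-right off-diagonal block of $Q_1 Q_2^{*}$ evaluates directly to $U_1 V_1^{*} Y_2^{*} = U_1 \tilde V_1^{*}$, of rank at most $2b$. The lower-left block is $L_1 Y_1^{*} + X_2 V_2 U_2^{*}$; the naive rank bound is only $b + 2b = 3b$, so an additional argument is needed to reach the sharp bound $2b$.

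The most economical way to close the gap is to exploit the symmetry of $Q_1 Q_2^{*}$, which is valid in the symmetric setting of this paper. From the QR decomposition one reads off $Q_1 = A R^{-1}$ and $Q_2 = R^{-1}$, and the normal equations give $R^{*} R = A^{*} A + I = A^2 + I$. Hence
\[
Q_1 Q_2^{*} = A R^{-1} R^{-*} = A (A^2 + I)^{-1},
\]
which commutes with $A$ and is therefore symmetric. The lower-left off-diagonal block is then the transpose of the upper-right one and also has rank at most $2b$. Since every off-diagonal block in the HODLR recursive partitioning of $Q_1 Q_2^{*}$ is a submatrix of $(Q_1 Q_2^{*})(1:k, k+1:n)$ or its transpose for some $k$, its rank is bounded by $2b$ as well.

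The main obstacle, and the reason the proof is not a verbatim copy of the tridiagonal case, is the lower-left block. A purely structural approach mimicking the single collinearity $Y_1(:, k) \parallel U_2(:, 1)$ of Theorem~\ref{thm:rank_tridiag_product} would require the $b$-fold inclusion $\myspan\{Y_1(:, k-b+1), \ldots, Y_1(:, k)\} \subseteq \myspan(U_2)$, established by cascading the collinearities produced by the $\gamma$-rotations of Algorithm~\ref{alg:alg2} across steps $k-b+1, \ldots, k$ via identities of the form $\pi_{j-1} = c_j \pi_j - s_j \hat u_2^{(j)}$; this is feasible but demands careful tracking of zero-extensions of intermediate vectors through subsequent rotations. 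Exploiting the symmetry of $Q_1 Q_2^{*}$ sidesteps this bookkeeping entirely.
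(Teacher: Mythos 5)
Your proof is correct, but it takes a genuinely different route from the paper's. The paper omits the proof of Theorem~\ref{thm:rank_band_product}, stating only that it is ``very similar'' to that of Theorem~\ref{thm:rank_tridiag_product}, i.e., a purely structural argument that tracks the Givens rotations of Algorithm~\ref{alg:alg2} and closes the gap on the lower-left block by a collinearity (in the banded case, a $b$-dimensional containment) between columns of $Y_1$ and the column space of $U_2$ --- exactly the bookkeeping you flag as the delicate part. You sidestep this entirely via the algebraic identity $Q_2 = R^{-1}$, $Q_1 = AR^{-1}$, $R^{*}R = A^{2}+I$, hence $Q_1Q_2^{*} = A(A^{2}+I)^{-1}$, which is symmetric for symmetric $A$ and is moreover independent of the sign ambiguity in the computed QR factors, so the lower-left block is the transpose of the upper-right one and inherits the bound $2b$ from the straightforward product $U_1V_1^{*}Y_2^{*}$. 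This is cleaner and arguably more illuminating (it identifies $Q_1Q_2^{*}$ as a rational function of $A$, consistent with the rank bounds of Lemma~\ref{lemma:rational_ranks}); its only cost is that it genuinely uses symmetry of $A$, whereas the paper's structural argument (and the statements of Theorems~\ref{thm:rank_tridiag_product} and~\ref{thm:rank_band_product}, which refer only to the output of the algorithms) would apply to any banded $A$. Since the section explicitly assumes $A$ symmetric and the QDWH iterates are symmetric, this restriction is harmless here. Two small points worth tightening if you write this up: state explicitly that $R(1{:}n,1{:}n)$ is invertible because $I = Q_2R(1{:}n,1{:}n)$, and note that your final reduction of arbitrary HODLR off-diagonal blocks to submatrices of $(Q_1Q_2^{*})(1{:}k,k{+}1{:}n)$ or its transpose is exactly the convention the paper uses, so no further argument is needed there.
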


\section{hQDWH algorithm}
\label{overall_algorithm}

Algorithm~\ref{alg:alg3}, summarizes the hQDWH algorithm proposed in this paper.

\begin{algorithm}[h!]
    \caption{\text{hQDWH algorithm}}
    \label{alg:alg3}
    \renewcommand{\algorithmicrequire}{\textbf{Input:}}
    \renewcommand{\algorithmicensure}{\textbf{Output:}}

   \begin{algorithmic}[1]
     
   \REQUIRE Symmetric banded matrix $A$ with bandwidth $b\ge 1$, minimal block-size $n_{\min} \ge 2$, truncation tolerance $\epsilon>0$, stopping tolerance $\delta>0$.
   \ENSURE Approximation $P$ in HODLR format to spectral projector $\Pi_{<0}(A)$. 
     
   \STATE Choose  initial parameters $\alpha, l_0$ of QDWH according to~\eqref{eqref:param_estimation}.\label{alg_alpha_l}
   \STATE $X_0 = A/\alpha$. 
   \STATE $k = 0$.

   \WHILE{$\vert 1 - l_k\vert > \delta$} \label{alg_stopping}   
     \STATE Compute $a_k$, $b_k$, $c_k$ according to the recurrence~\eqref{eq:qdwh_parameters_abc}.
      \IF{$k = 0$}
        \STATE Apply $\begin{displaystyle}\begin{cases} 
                    \text{Algorithm~\ref{alg:alg1},}  &\text{for } b = 1\\
                    \text{Algorithm~\ref{alg:alg2},}  &\text{for } b > 1\\
                      \end{cases}\end{displaystyle}$
               to $\begin{bmatrix}\sqrt{c_0}X_0\\ I\end{bmatrix}$ and store resulting array $G$ of Givens rotations.\label{alg_compute_givens} 
        \STATE Compute $Q_1$ and $Q_2$ from $G$ in HODLR format; see Remark~\ref{construct_off}. \label{alg_compute_Q} 
        \STATE $X_{1} = \frac{b_0}{c_0}*_{\h}X_0 +_{\h}\frac{1}{\sqrt{c_0}}\left(a_0 - \frac{b_0}{c_0}\right)*_{\h}Q_1*_{\h}Q_2^{*}$. \label{alg_QR_it}
      \ELSE
         \STATE $W_k = \h\operatorname{-Cholesky}(I +_{\h} c_k*_{\h}X_k^{*}*_{\h}X_k)$. \label{alg_chol1} 
         \STATE Solve upper-triangular system $Y_kW_k = X_k$ in HODLR format. 
         \STATE Solve lower-triangular system $V_kW_k^{*} = Y_k$ in HODLR format. 
        \STATE  $X_{k+1} = \frac{b_k}{c_k}*_{\h}X_k +_{\h} \left(a_k - \frac{b_k}{c_k}\right)*_{\h}V_k$. \label{alg_chol}  
     \ENDIF
     \STATE $k = k+1$.
     \STATE $l_k = l_{k-1}(a_{k-1} + b_{k-1}l^{2}_{k-1})/(1 + c_{k-1}l^{2}_{k-1})$. 
   \ENDWHILE
   \STATE $U = X_k$. \label{alg_sign} 
   \STATE $P =\frac{1}{2}*_{\h}(I -_{\h}U)$. \label{alg_sp} 
   \end{algorithmic}
\end{algorithm}

\noindent In the following, we comment on various implementation details of Algorithm~\ref{alg:alg3}.
\begin{description}
\item[line~\ref{alg_alpha_l}] As proposed in~\cite{NakaHigh2013}, the parameters $\alpha \gtrsim \Vert A \Vert_2$ and $l_0 \lesssim \sigma_{\min}(X_0)$ needed to start the QDWH algorithm are estimated as
\begin{equation}
\label{eqref:param_estimation}
 \alpha = \texttt{normest}(A), \quad l_0 = \Vert A/\alpha \Vert_1 / (\sqrt{n}\cdot \texttt{condest}(A/\alpha))\text{,}
\end{equation}
where \texttt{normest} and \texttt{condest} denote the \Matlab{} functions for estimating the matrix $2$--norm using the power method and the $1$--norm condition number using~\cite{HighTiss2000}, respectively. Both functions exploit that $A$ is sparse and require $\calO(bn)$ and $\calO(b^2 n)$ operations, respectively.

\item[lines~\ref{alg_compute_givens}--~\ref{alg_QR_it}] This part of the algorithm deals with the implementation of the first QR-based iterate~\eqref{eq:qdwh_qr_reccur}. The generation of Givens rotations by Algorithms~\ref{alg:alg1} and~\ref{alg:alg2} for reducing $\big[ {\sqrt{c_0}X_0 \atop I} \big]$ to triangular form has been implemented in a \textsc{C} function, making use of the LAPACK routine {\tt DLARTG}. The function is called via a MEX interface and returns an array $G$ containing the cosines and sines of all rotations. This array is then used 
in~\ref{alg_compute_Q} to generate $Q_1$ and $Q_2$ in the HODLR format, whose precise form is defined by the input parameter $n_{\min}$.
\item[lines~\ref{alg_chol1}--~\ref{alg_chol}] The computation of the $k$th iterate $X_k$, $k>1$, involves the Cholesky decomposition, addition, and the solution of triangular linear systems in the HODLR format. Existing techniques for HODLR matrices have been used for this purpose, see Section~\ref{hmatrices}, and repeated recompression with the absolute truncation tolerance $\epsilon$ is applied.
 \end{description}

\begin{remark}
Algorithm~\ref{alg:alg3} extends in a straightforward way to the more general hierarchical matrix format from Section~\ref{sec:hierarchicalmatrices}. 
The only major difference is the need for converting the matrices after line~\ref{alg_QR_it} from the HODLR to the hierarchical matrix format.
This extension of Algorithm~\ref{alg:alg3} was used in Example~\ref{ex:hodlrvshmatrix}. 
\end{remark}

Assuming that all ranks in the off-diagonal blocks are bounded by $k \ge b$, Algorithm~\ref{alg:alg3} requires $\calO (kn \log n)$ memory and $\mathcal{O}(k^2n\log^2 n)$ operations.

\section{Numerical experiments}
\label{experiments}

In this section, we demonstrate the performance of our preliminary \Matlab{} implementation of the hQDWH algorithm. All 
computations were performed in \Matlab{} version 2014a on an Intel Xeon CPU with 3.07GHz, $4096$ KByte of level $2$ cache 
and $192$ GByte of RAM. To be able to draw a fair comparison, all experiments were performed on a single core. 

To measure the accuracy of the QDWH algorithm, we use the functions $e^{Q}_{\id}$, $e^{Q}_{\trace}$, $e^{Q}_{\SP}$ defined in~\eqref{eq:error_measures}. The error measures $e^{\h}_{\id}$,  $e^{\h}_{\trace}$, $e^{\h}_{\SP}$ for the hQDWH algorithm are defined analogously. 
In all experiments, we used the tolerance $\delta = 10^{-15}$ for stopping the QDWH/hQDWH algorithms. Unless stated otherwise, the truncation tolerance for recompression in the HODLR format is set to 
$\epsilon = 10^{-10}$; the minimal block-size is set to 
$n_{\min} = 250$ for tridiagonal matrices and $n_{\min} = 500$ for banded matrices. 

The performance of the algorithm is tested on various types of matrices, including synthetic examples as well as examples from widely used sparse matrix collections.
  
\subsection{Construction of synthetic test matrices}
\label{sec:construct_test_matrices}

Given a prescribed set of eigenvalues $\lambda_1,\ldots, \lambda_n$ and a bandwidth $b$, we construct a symmetric $b$--banded matrix 
by an orthogonal similarity transformation of $A = \diag(\lambda_1,\ldots, \lambda_n)$.
For this purpose, we perform the following operation for $i = n,n-1,\ldots,2$:

First, a Givens rotations $G(i-1,i,\alpha_i)$ is created by annihilating the second component of the vector $\big[ {a_{ii} \atop 1 } \big]$. The update $A \gets G(i-1,i,\alpha_i)^* A G(i-1,i,\alpha_i)$ introduces nonzero off-diagonal elements in $A$. For 
$i=n,\ldots,n-b+1$, this fill-in stays within the $b$ bands. For $i \le n-b$, two undesired nonzero elements are created in row $i-1$ and column $i-1$ outside the $b$ bands. These nonzero elements are immediately chased off to the bottom right corner by applying $n-b-i+1$ Givens rotations, akin to Schwarz band reduction~\cite{Schwarz1968}.

When the procedure is completed, the $b$ bands of $A$ are fully populated.

In all examples below, we choose the eigenvalues to be uniformly distributed in $[-1,-\gap] \cup [\gap,1]$. Our results indicate that the 
performance of our algorithm is robust with respect to the choice of eigenvalue distribution. In particular, the timings stay 
almost the same when choosing a distribution geometrically graded towards the spectral gap. 

\subsection{Results for tridiagonal matrices}

\begin{example}[\bfseries{Accuracy versus $\gap$}]
\label{ex:error_vs_gap_tridiag} 
\rm First we investigate the behavior of the errors for hQDWH and QDWH with respect to the spectral gap. Using the construction from Section~\ref{sec:construct_test_matrices}, we consider $10000 \times 10000$ tridiagonal matrices with
eigenvalues in $[-1,\hskip 3pt -\gap] \cup [\gap, \hskip 3pt 1]$, where 
$\gap$ varies from $10^{-15}$ to $10^{-1}$.  From Figure~\ref{fig:error_vs_gap_tol_tridiag} (left), it can be seen that 
tiny spectral gaps do not have a significant influence on the distance from identity and the trace error for both algorithms. 
On the other hand, both $e^{\h}_{\SP}$ and $e^{Q}_{\SP}$ are sensitive to a decreasing gap, which reflects the ill-conditioning of the spectral projector for small gaps.   
\end{example}

\begin{figure}[!h]
\begin{center}
\includegraphics[width=0.48\textwidth]{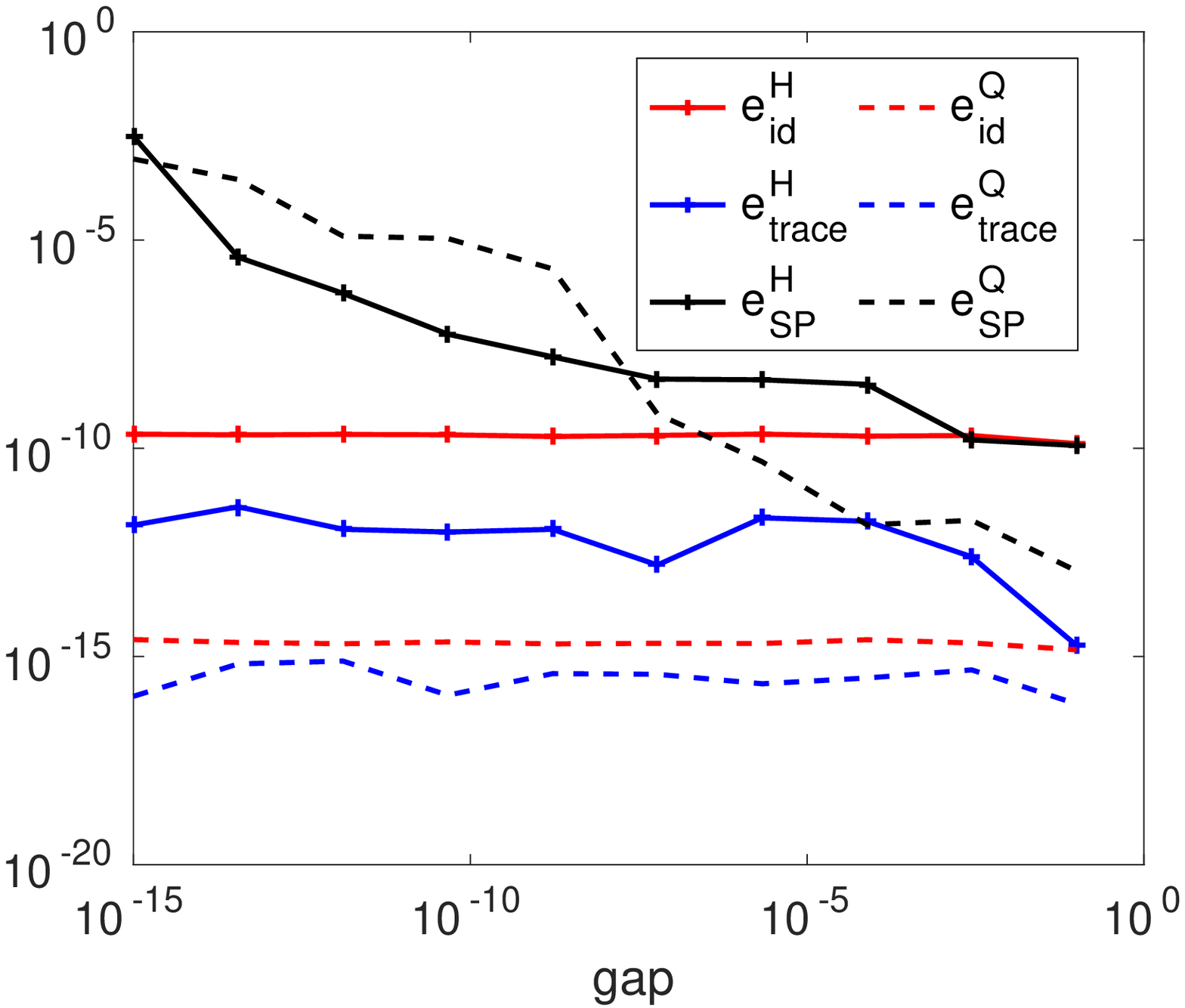}
\includegraphics[width=0.48\textwidth]{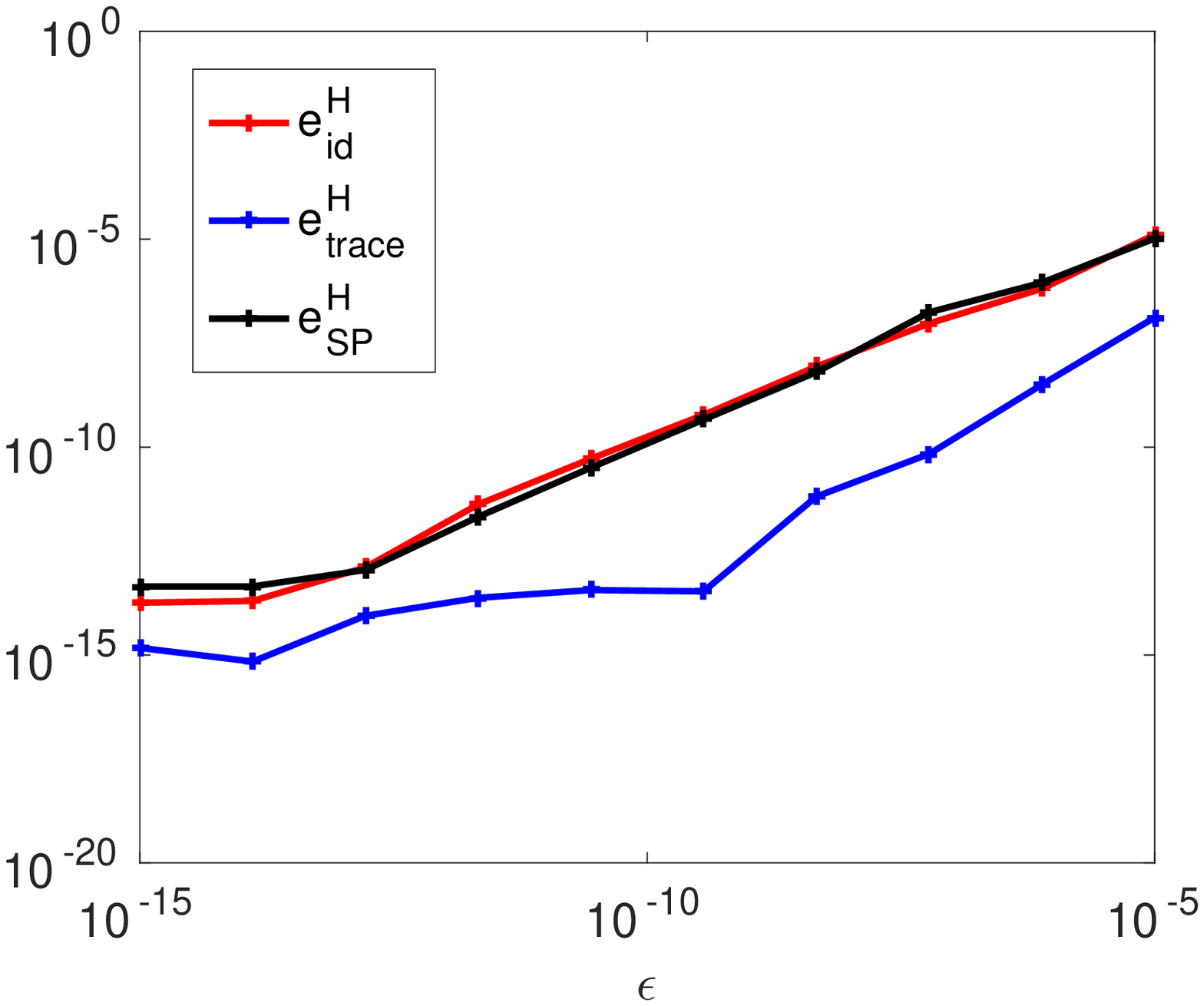}
\end{center}
\caption{Left (Example~\ref{ex:error_vs_gap_tridiag}): Comparison of accuracy for hQDWH and QDWH applied to tridiagonal matrices. Right (Example~\ref{ex:error_vs_tolerance_tridiag}): 
Accuracy of hQDWH for different truncation tolerances.}
\label{fig:error_vs_gap_tol_tridiag}
\end{figure}

\begin{example}[\bfseries{Accuracy versus $\epsilon$}]
 \label{ex:error_vs_tolerance_tridiag} 
{\rm We again consider a tridiagonal matrix $A\in\mathbb{R}^{10000\times 10000}$, with the eigenvalues
in $[-1, \hskip 3pt -10^{-4}] \cup [10^{-4}, \hskip 3pt 1]$. The truncation tolerance $\epsilon$ for recompression in the HODLR format is varied in the interval
$[10^{-15}, 10^{-5}]$. Figure~\ref{fig:error_vs_gap_tol_tridiag} shows the resulting errors in the hQDWH algorithm. As
 expected, the errors  $e^{\h}_{\id}$,  $e^{\h}_{\trace}$, $e^{\h}_{\SP}$ increase as $\epsilon$ increases. Both $e^{\h}_{\id}$ and $e^{\h}_{\SP}$ grow linearly with 
 respect to $\epsilon$, while $e^{\h}_{\trace}$ appears to be a little more robust. }
\end{example}

\begin{example}[\bfseries{Accuracy for examples from matrix collections}]
\label{ex:accuracy_applications}
\textnormal{We tested the accuracy of the hQDWH algorithm for the matrices from applications also considered in~\cite{MarqVomeDemmParl2009}:}
\begin{itemize}
\item \textnormal{Matrices from the BCSSTRUC1 set in the Harwell-Boeing Collection~\cite{Davis2007}. In these examples, a finite element discretization leads to a generalized eigenvalue problem $Kx = \lambda Mx$ with $K,M$ symmetric positive definite. 
We consider the equivalent standard eigenvalue problem $L^{-1}KL^{-T}$, where $L$ denotes the Cholesky factor of $M$, and shift the matrix such that 
 approximately half of its spectrum is negative. Finally, the matrix is reduced to a tridiagonal matrix using the Matlab{} function \texttt{hess}.}
\item \textnormal{Matrices from UF Sparse Matrix Collection~\cite{Davis2007}. We consider the symmetric Alemdar and Cannizzo matrices, as well as
  a matrix from the NASA set. Again, the matrices are shifted such that roughly half of their spectrum is negative and then reduced to tridiagonal form.}
\end{itemize}%
\begin{table}[h!]
\centering
{\renewcommand{\arraystretch}{1.2}
\begin{tabular}{c||c||c|c|c|c|c|c}

&matrix &$n$ &$e^{\h}_{\id}$ &$e^{\h}_{\trace}$ &$e^{\h}_{\SP}$ &$\Vert \cdot \Vert_2$  &$\gap$\\
 \hline
 \hline
 \multirow{3}{*}{\begin{sideways}\tiny{BCSSTRUC1}\end{sideways}} &\text{bcsst08} &$1074$ &$10^{-11}$ &$10^{-13}$ &$10^{-9}$ &$1.68\cdot10^{7}$ &$1.2615$\\
 \cline{2-8}
 &\text{bcsst09} &$1083$ &$10^{-10}$ &$10^{-11}$ &$10^{-8}$ &$4.29\cdot 10^{14}$ &$2.13\cdot 10^{10}$\\
 \cline{2-8}
 &\text{bcsst11} &$1474$ &$10^{-10}$ &$10^{-12}$ &$10^{-7}$ &$4.75\cdot 10^{9}$ &$1.58\cdot 10^{5}$\\
 \hline
 \hline
 &\text{Cannizzo matrix} &$4098$ &$10^{-10}$ &$10^{-10}$ &$10^{-7}$ &$3.07\cdot 10^{8}$ &$0.728$\\
  \cline{2-8}
 &\text{nasa4704} &$4704$ &$10^{-10}$ &$10^{-12}$ &$10^{-9}$ &$2.07\cdot 10^{8}$ &$20.182$\\
 \cline{2-8}
 &\text{Alemdar matrix} &$6245$ &$10^{-10}$ &$10^{-11}$ &$10^{-7}$ &$69.5$ &$0.0079$\\
 \cline{2-8} 
 \end{tabular}
 }
\caption{Accuracy of hQDWH for tridiagonal matrices from Example~\ref{ex:accuracy_applications}.}
 \label{table:error_test_matrices}
\end{table}%
{\rm Table~\ref{table:error_test_matrices} reveals that the hQDWH algorithm yields accurate approximations also
for applications' matrices. More specifically, in all examples,  $e^{\h}_{\id}$ and $e^{\h}_{\trace}$ obtain values of order of $\epsilon$, 
while $e^{\h}_{\SP}$ shows dependence on the relative spectral gap. } 
\end{example}

\begin{example}[\textbf{Breakeven point relative to {\tt eig}}]
\textnormal{To compare the computational times of the hQDWH algorithm with \texttt{eig}, we  
consider tridiagonal matrices with eigenvalues contained in $[-1, \hskip 3pt -\gap] \cup [\gap, \hskip 3pt 1]$ for various gaps. 
Table~\ref{table:break_even_point_tridiag} shows the resulting breakeven points, that is, the value of $n$ such that hQDWH is faster than {\tt eig} for matrices of size at least $n$. Not surprisingly,
this breakeven point depends on the gap, as ranks are expected to increase as the gap decreases.
However, even for $\gap = 10^{-4}$, the hQDWH 
algorithm becomes faster than \texttt{eig} for matrices of moderate size ($n \ge 3250$) and the ranks of the off-diagonal blocks in the HODLR representation of the spectral projector remain reasonably small.}   
\begin{table}[ht!]

\centering
{\renewcommand{\arraystretch}{1.2}
\begin{tabular}{c||c||c}
gap &breakeven point &max off-diagonal rank \\
 \hline
 \hline
 $10^{-1}$ &$n=2250$ &$18$\\
 \hline
 $10^{-2}$ &$n=2500$ &$28$\\
 \hline
 $10^{-3}$ &$n=2750$ &$35$ \\
 \hline
 $10^{-4}$ &$n=3250$ &$37$\\
 \hline
\end{tabular}
 }
 \caption{Breakeven point of hQDWH relative to \texttt{eig} for tridiagonal matrices. The last column shows the maximal off-diagonal rank in the output of hQDWH.}
 \label{table:break_even_point_tridiag}
 \end{table}
\end{example}

\begin{example}[\bfseries{Performance versus $n$}]
\label{ex:scaling_tridiag}
{\rm In this example, we investigate the asymptotic behavior of the hQDWH algorithm, in terms of computational time and memory, for tridiagonal matrices with eigenvalues in $[-1, \hskip 3pt -10^{-6}] \cup 
[10^{-6}, \hskip 3pt 1]$. Figure~\ref{fig:gap_6_tridiag_res} indicates that the expected $\mathcal{O}(n\log^2n)$ computational time and $\mathcal{O}(n\log n)$ are nicely matched. The faster increase for smaller $n$ is due to fact that the off-diagonal ranks first grow from $30$ to $64$ until they settle around $64$ for sufficiently large $n$. }

\begin{figure}[!h]
\begin{center}
\includegraphics[width=0.48\textwidth]{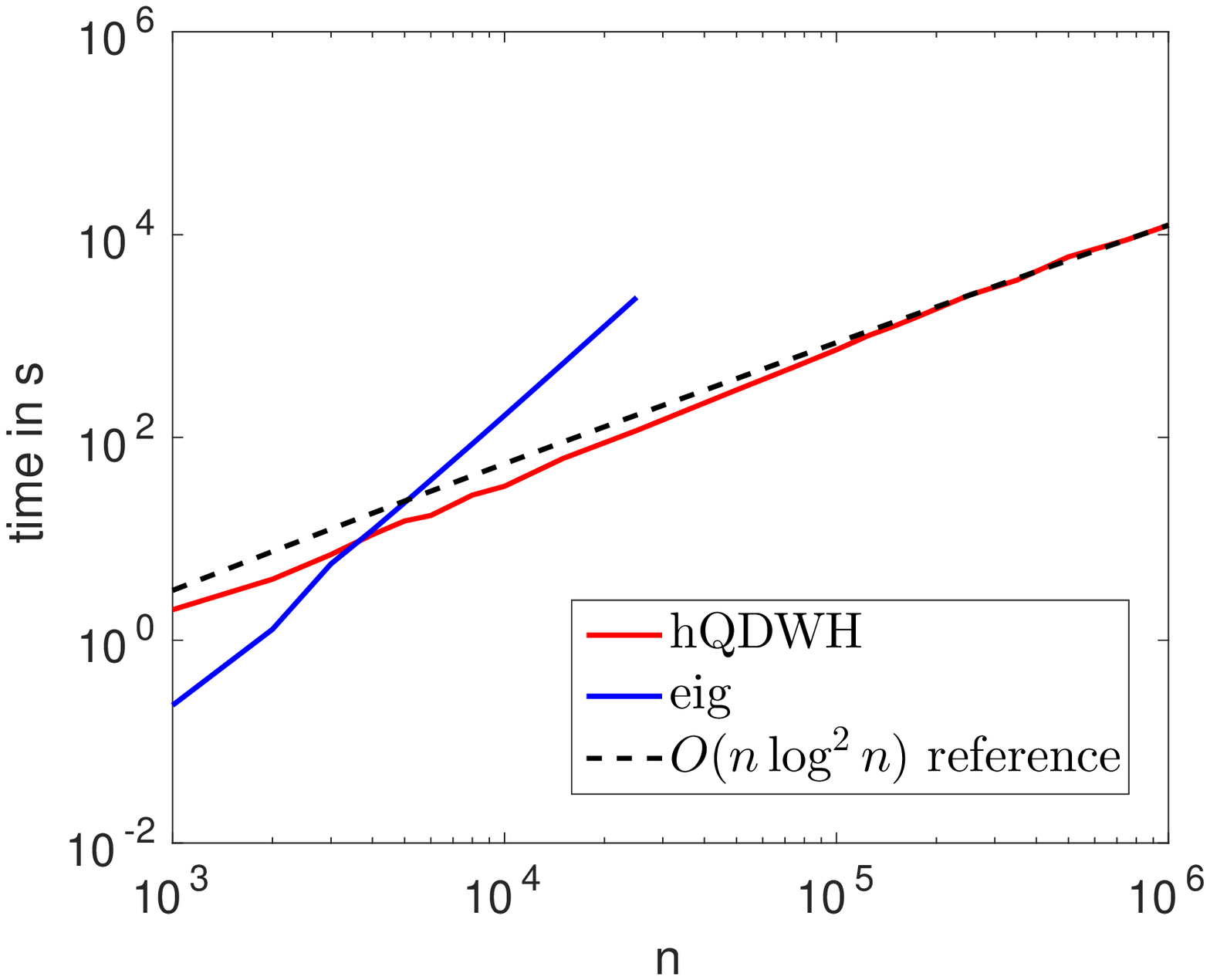}
\includegraphics[width=0.48\textwidth]{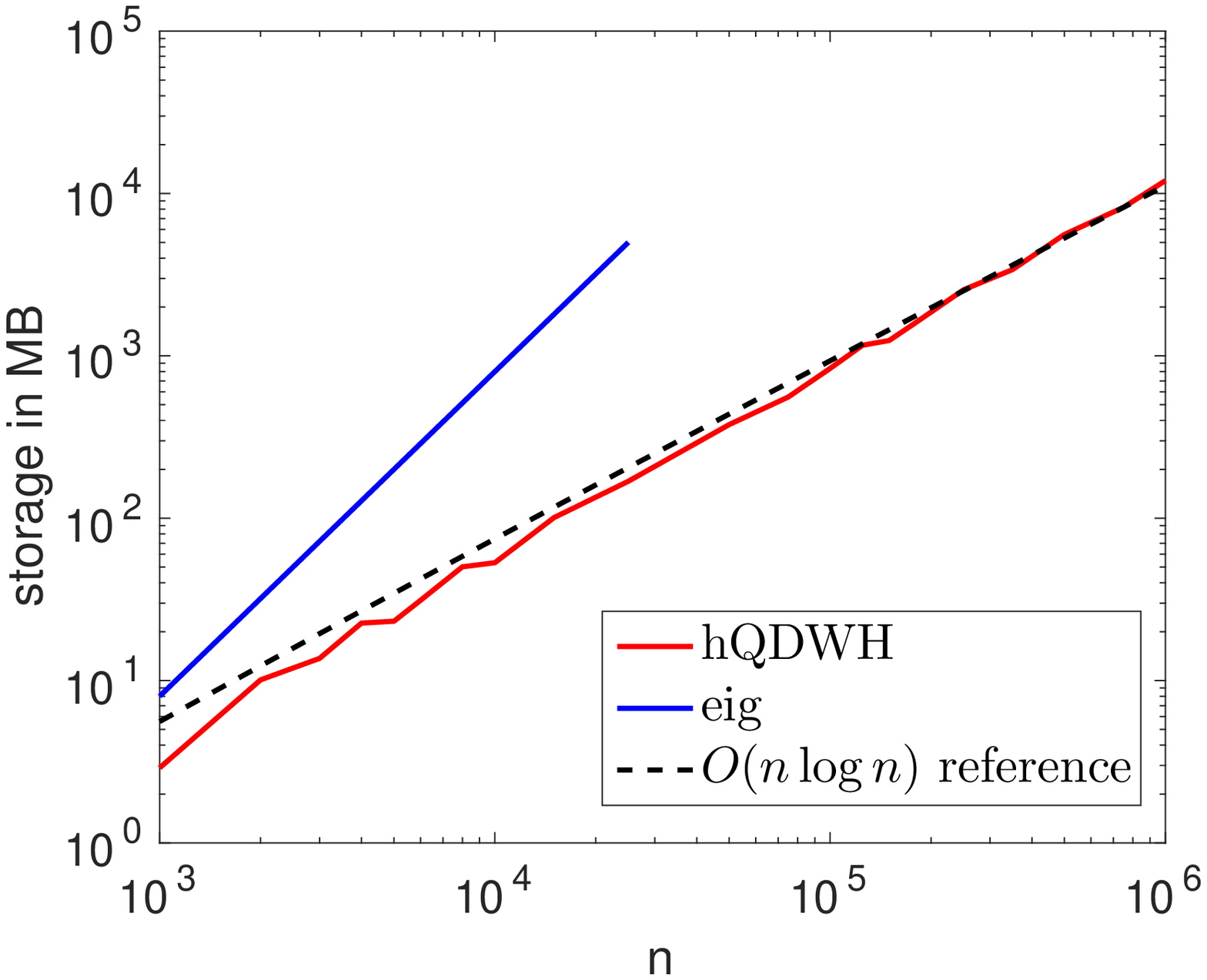}
\end{center}
\caption{Example~\ref{ex:scaling_tridiag}. Performance of hQDWH and \texttt{eig} applied to tridiagonal matrices with respect to $n$. Left: Computational time. Right: Memory requirements.}
\label{fig:gap_6_tridiag_res}
\end{figure}

\end{example}

\begin{example}[\bfseries{Performance for 1D Laplace}]
\label{ex:scaling_tridiag_gallery}
{\rm It is interesting to test the performance of the hQDWH algorithm for matrices for which the spectral gap decreases as $n$ increases. The archetypical example is the (scaled) tridiagonal matrix from the central difference discretization of the $1$D Laplace operator, with eigenvalues $\lambda_k = 2 - 2\cos\frac{k\pi}{n+1}$ for $k = 1,\ldots,n$. The matrix is shifted by $2$, such that half of its spectrum is negative and the eigenvalues become equal to $\lambda_k = - 2\cos\frac{k\pi}{n+1}$. The spectral gap is given by $\gap = 2\sin\frac{\pi}{n+1} = \calO(1/n^2)$. According to Theorem~\ref{sign_sing_value_decay}, the numerical ranks of the off-diagonal blocks depend logarithmically on the spectral gap. Thus, we expect that the hQDWH algorithm
requires $\mathcal{O}(n\log^4n)$ computational time and $\mathcal{O}(n\log^2 n)$ memory for this matrix. Figure~\ref{fig:gallery_tridiag} nicely confirms this expectation. }   

\begin{figure}[!h]
\begin{center}
\includegraphics[width=0.48\textwidth]{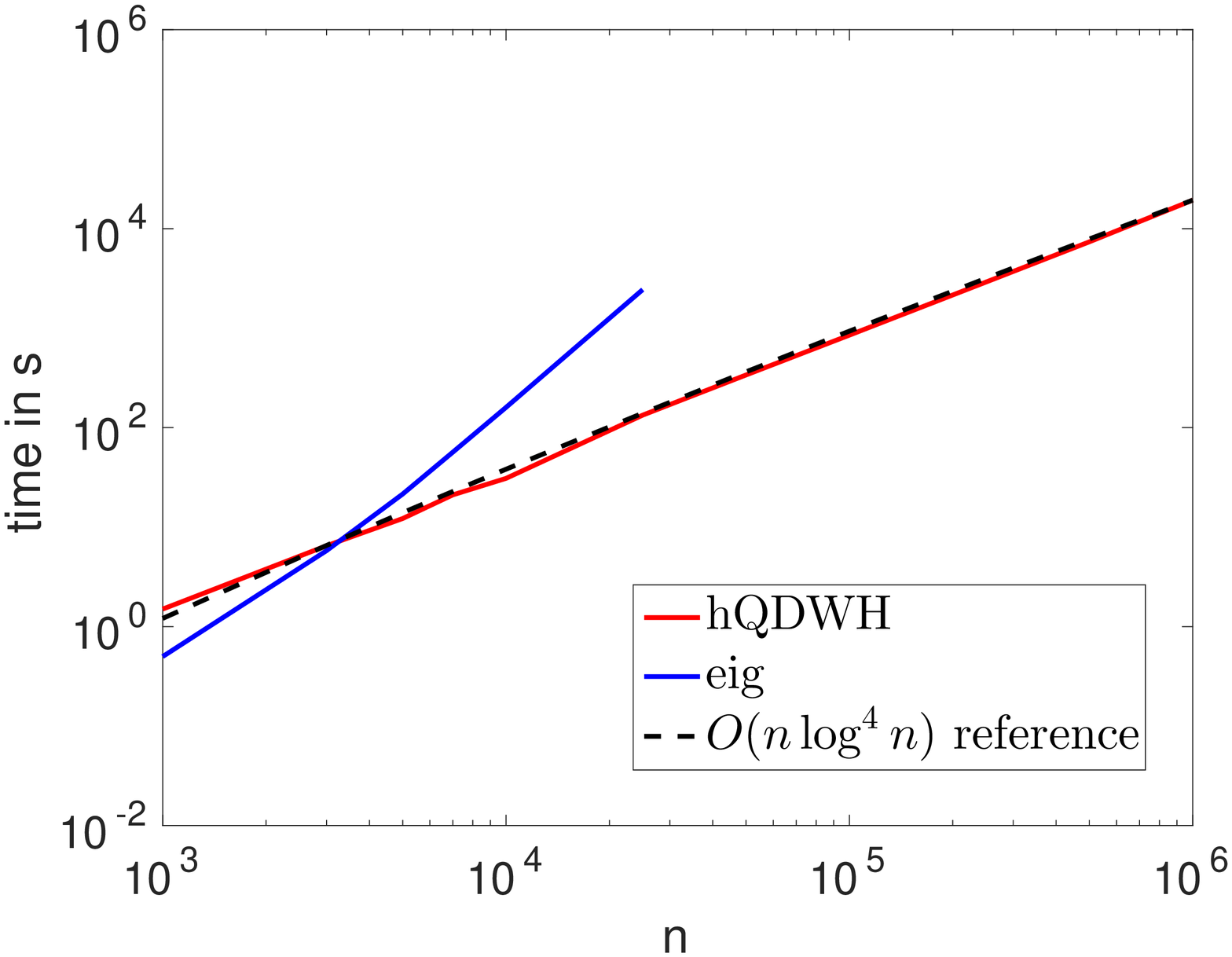}
\includegraphics[width=0.48\textwidth]{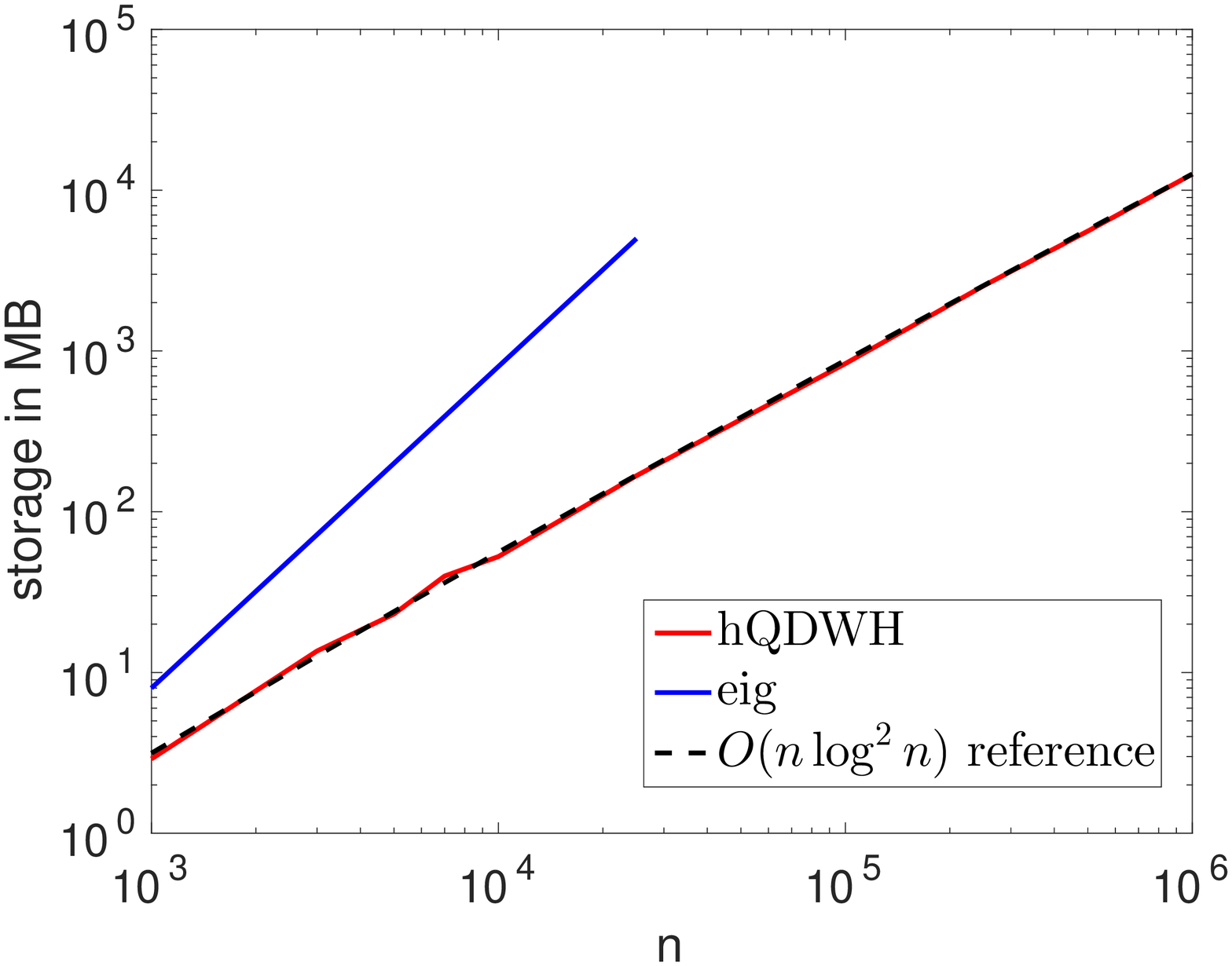}
\end{center}
\caption{Example~\ref{ex:scaling_tridiag_gallery}. hQDWH and \texttt{eig} for discretized 1D Laplace. Left: Computational time with respect to $n$. Right: Memory requirements with respect to $n$.}
\label{fig:gallery_tridiag}
\end{figure}
\end{example}

\begin{example}[\bfseries{Performance versus $n_{\min}$}]
 \label{ex:time_vs_nb_tridiag}  
{\rm The choice of the minimal block size $n_{\min}$ in the HODLR format influences the performance of hQDWH.
We have investigated this dependence for $50\,000\times 50\,000$ tridiagonal matrices with eigenvalues contained 
in $[-1, \hskip 3pt -\gap] \cup [\gap, \hskip 3pt 1]$, 
and $\gap \in \lbrace 10^{-1}, 10^{-4}, 10^{-6}\rbrace$. Figure~\ref{fig:time_vs_nb_tridiag} indicates that the optimal 
value of $n_{\min}$ increases for smaller gaps.  However, the execution time is not overly 
sensitive to this choice; a value of $n_{\min}$ between $200$ and $500$ leads to 
good performance.}
%

\begin{figure}[!h]
\begin{center}
\includegraphics[width=0.48\textwidth]{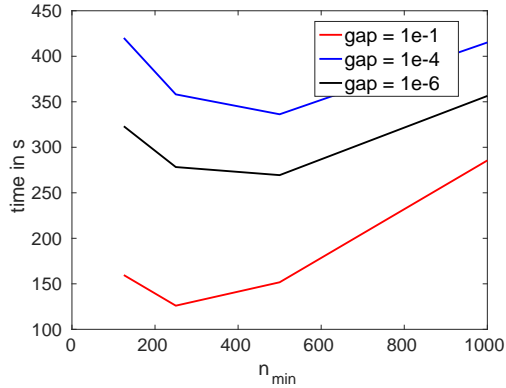}
\end{center}
\caption{Example~\ref{ex:time_vs_nb_tridiag}. Computational time of hQDWH versus $n_{\min}$.}
\label{fig:time_vs_nb_tridiag}
\end{figure}
\end{example}

\subsection{Results for banded matrices}

\begin{example}[\bfseries{Accuracy versus $\gap$}]
 \label{ex:band_error_vs_gap}
{\rm Similarly to Example~\ref{ex:error_vs_gap_tridiag}, we study the impact of the spectral gap on the accuracy of hQDWH and QDWH for banded matrices.
Using once again the construction from Section~\ref{sec:construct_test_matrices}, we consider $10000 \times 10000$ banded matrices with bandwidth $8$
and eigenvalues in $[-1,\hskip 3pt -\gap] \cup [\gap, \hskip 3pt 1]$, where 
$\gap$ varies from $10^{-15}$ to $10^{-1}$. The left plot of Figure~\ref{fig:error_vs_gap_tol_band} reconfirms the observations from Example~\ref{ex:error_vs_gap_tridiag}.}  
\end{example}

\begin{figure}[!h]
\begin{center}
\includegraphics[width=0.48\textwidth]{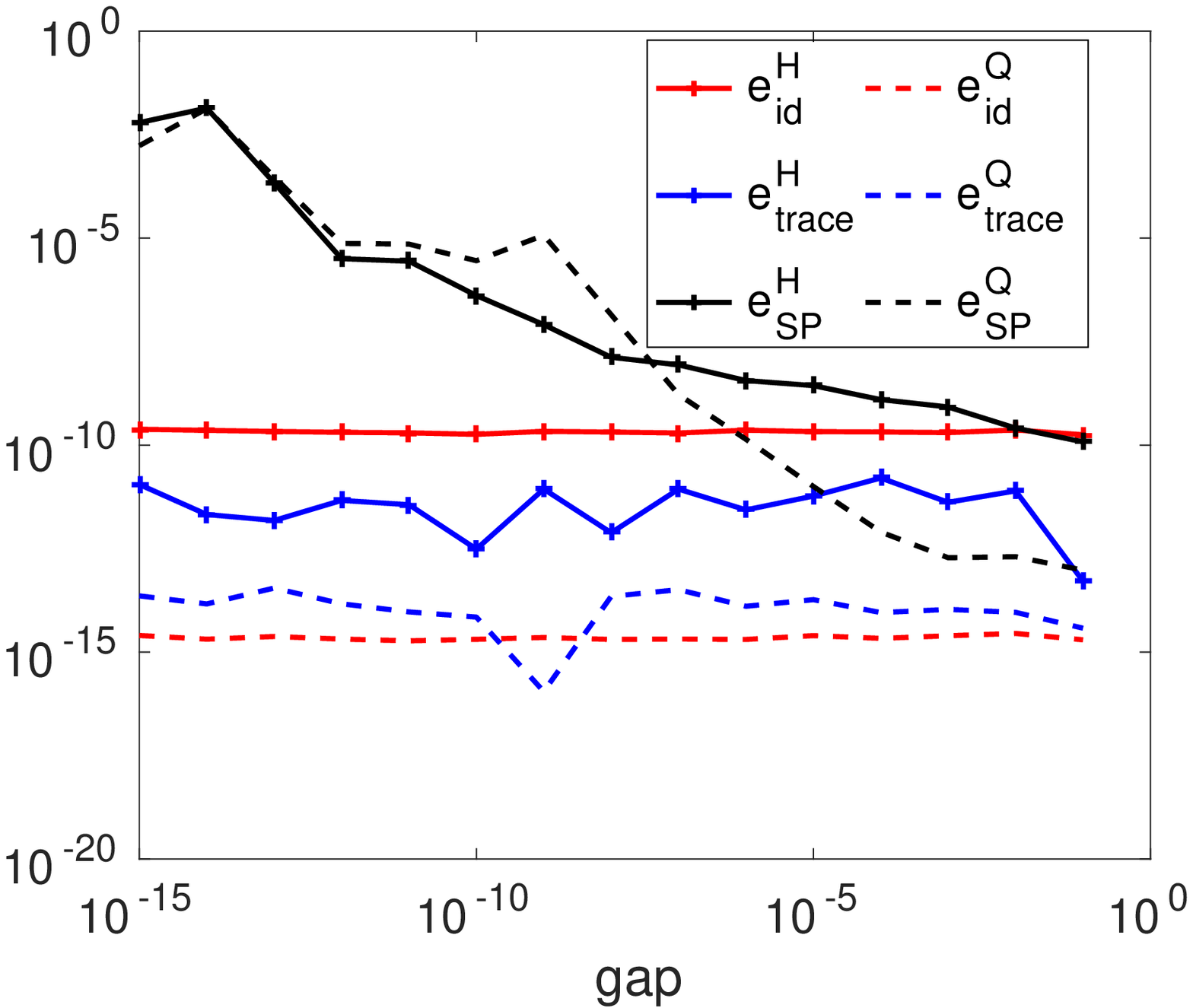}
\includegraphics[width=0.48\textwidth]{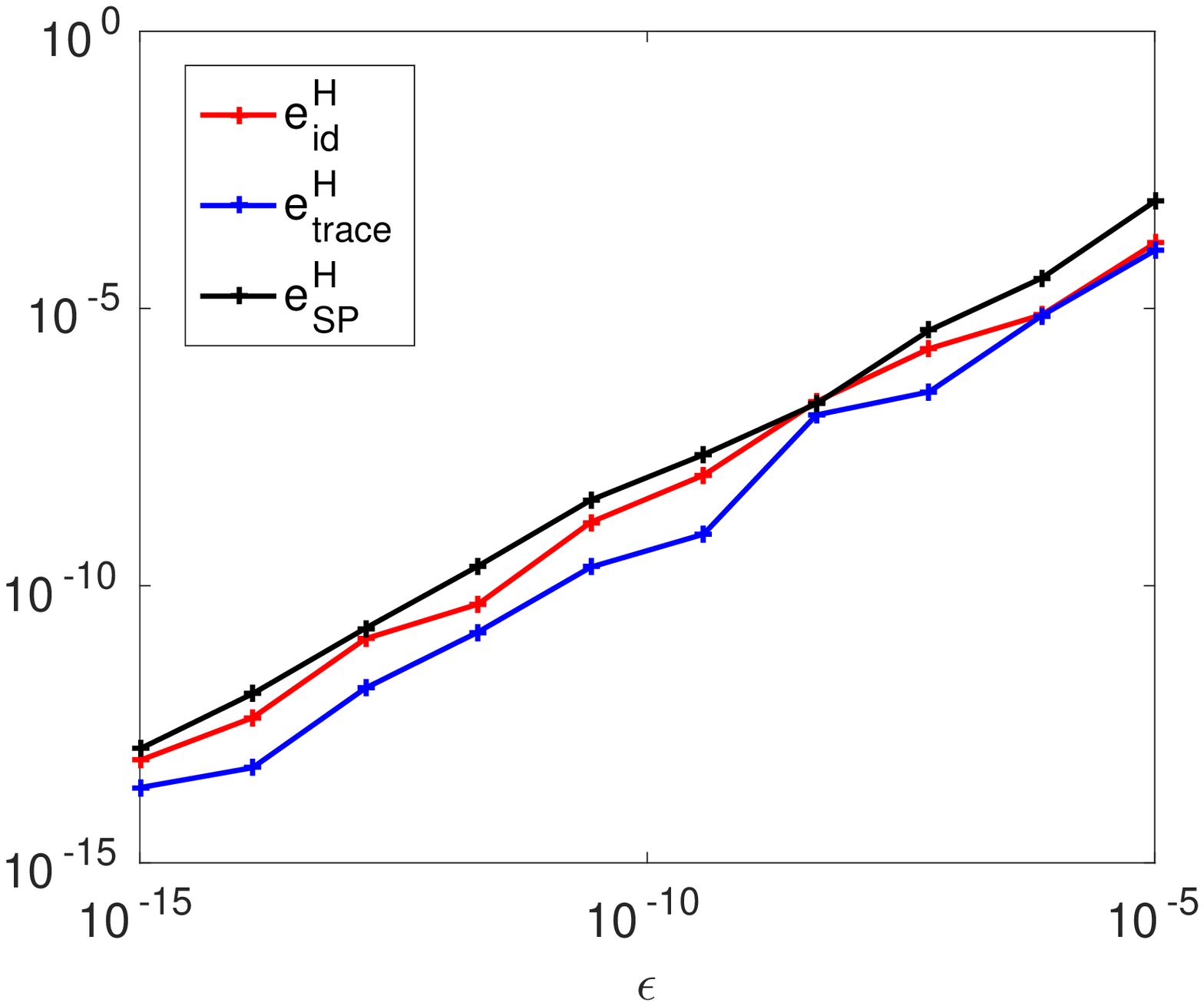}
\end{center}
\caption{Left (Example~\ref{ex:band_error_vs_gap}): Comparison of accuracy for hQDWH and QDWH applied to banded matrices with bandwidth $8$. Right (Example~\ref{ex:band_error_vs_tol}): 
Accuracy of hQDWH for different truncation tolerances.}
\label{fig:error_vs_gap_tol_band}
\end{figure}

\begin{example}[\bfseries{Accuracy versus $\epsilon$}]
\label{ex:band_error_vs_tol} 
\textnormal{We investigate the influence of the truncation tolerance $\epsilon$ on accuracy for an $10\,000\times 10\,000$ banded matrix with bandwidth $b = 8$ and 
the eigenvalues contained in $[-1, \hskip 3pt -10^{-4}] \cup [10^{-4}, \hskip 3pt 1]$. The right plot of Figure~\ref{fig:error_vs_gap_tol_band} reconfirms the observations from Example~\ref{ex:error_vs_tolerance_tridiag}. }      
\end{example}

\begin{example}[\bfseries{Breakeven point relative to {\tt eig}}]
{\rm Table~\ref{table:break_even_point_banded} shows when hQDWH becomes faster than \texttt{eig} for $n\times n$ banded matrices with eigenvalues 
contained in $[-1, \hskip 3pt -\gap] \cup [\gap, \hskip 3pt 1]$ for $\gap = 10^{-1}$ and $10^{-4}$. Compared to Table~\ref{table:break_even_point_tridiag}, the breakeven point is lower for bandwidths $b = 2$ and $b= 4$ than for bandwidth $1$. This is because {\tt eig} needs to perform tridiagonal reduction when $b\ge 2$. } 

\begin{table}[ht!]

\centering
{\renewcommand{\arraystretch}{1.2}
\begin{tabular}{c||c|c|c|c}
\backslashbox[5mm]{gap}{b} &$2$ &$4$ &$8$ &$16$\\
 \hline
 \hline
 $10^{-1}$ &$n=1250$ &$n=1750$ &$n=2500$ &$n=5250$\\
 \hline
 $10^{-4}$ &$n=1750$ &$n=2500$ &$n=5000$ &$n=9500$\\
\end{tabular}
 }
\caption{Breakeven point of hQDWH relative to \texttt{eig} applied for banded matrices with various bandwidths and spectral gaps.}
\label{table:break_even_point_banded}
\end{table}
\end{example}

\begin{example}[\bfseries{Performance versus $n$}]
\label{ex:scaling_banded_wrt_n}
\textnormal{We consider banded matrices with bandwidth $4$ and with eigenvalues contained in $[-1, \hskip 3pt -10^{-1}] \cup [10^{-1}, \hskip 3pt 1]$.
As in Example~\ref{ex:scaling_tridiag}, Figure~\ref{fig:gap_1_band_4_res} confirms that the computational time of hQDWH scales like $\mathcal{O}(n\log^2n)$ while memory scales like $\mathcal{O}(n\log n)$. Note that the maximal rank in the off-diagonal blocks
is $66$ for $n = 1\,000$, and $71$ for $n = 500\,000$.}

\begin{figure}[!h]
\begin{center}
\includegraphics[width=0.48\textwidth]{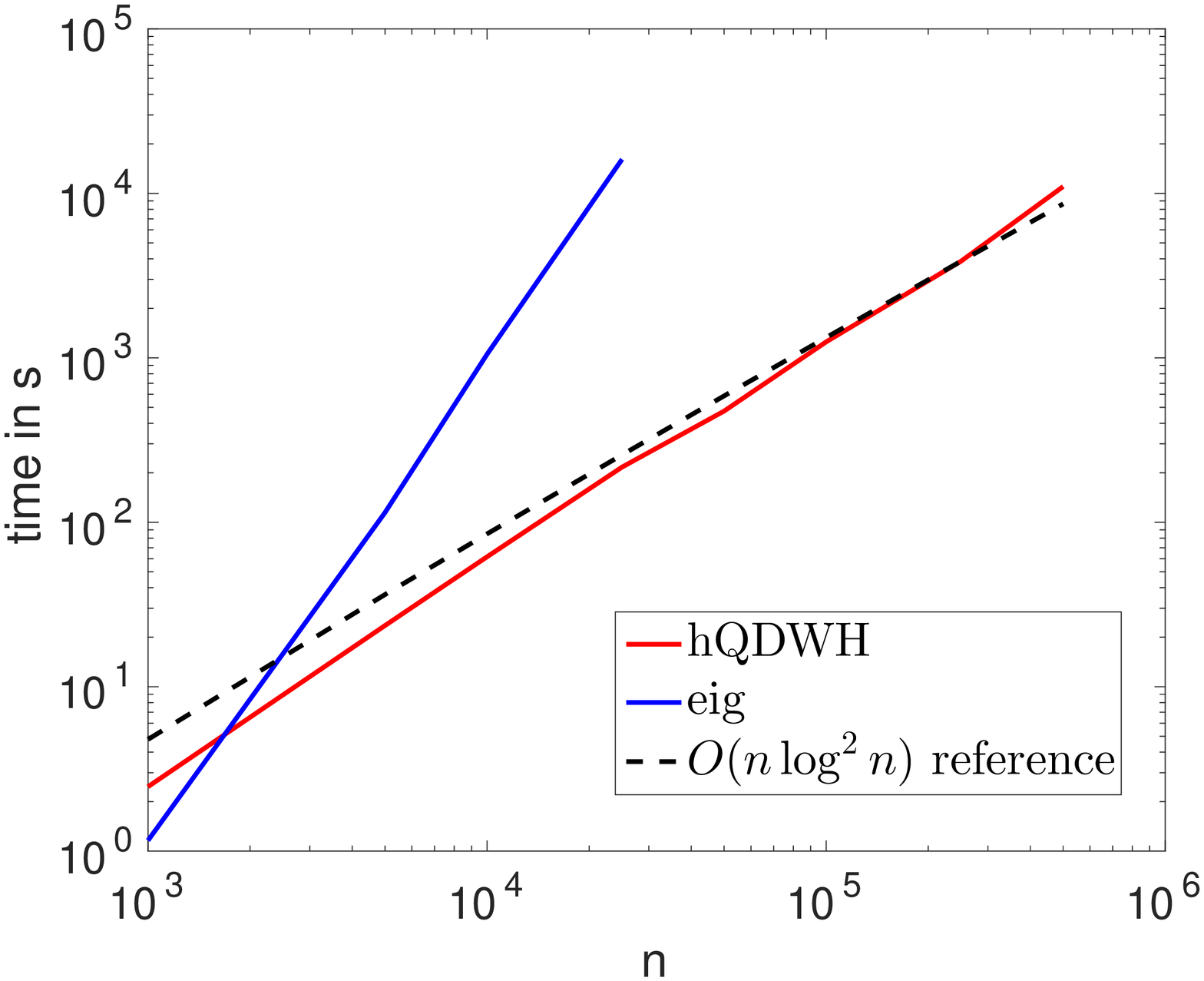}
\includegraphics[width=0.48\textwidth]{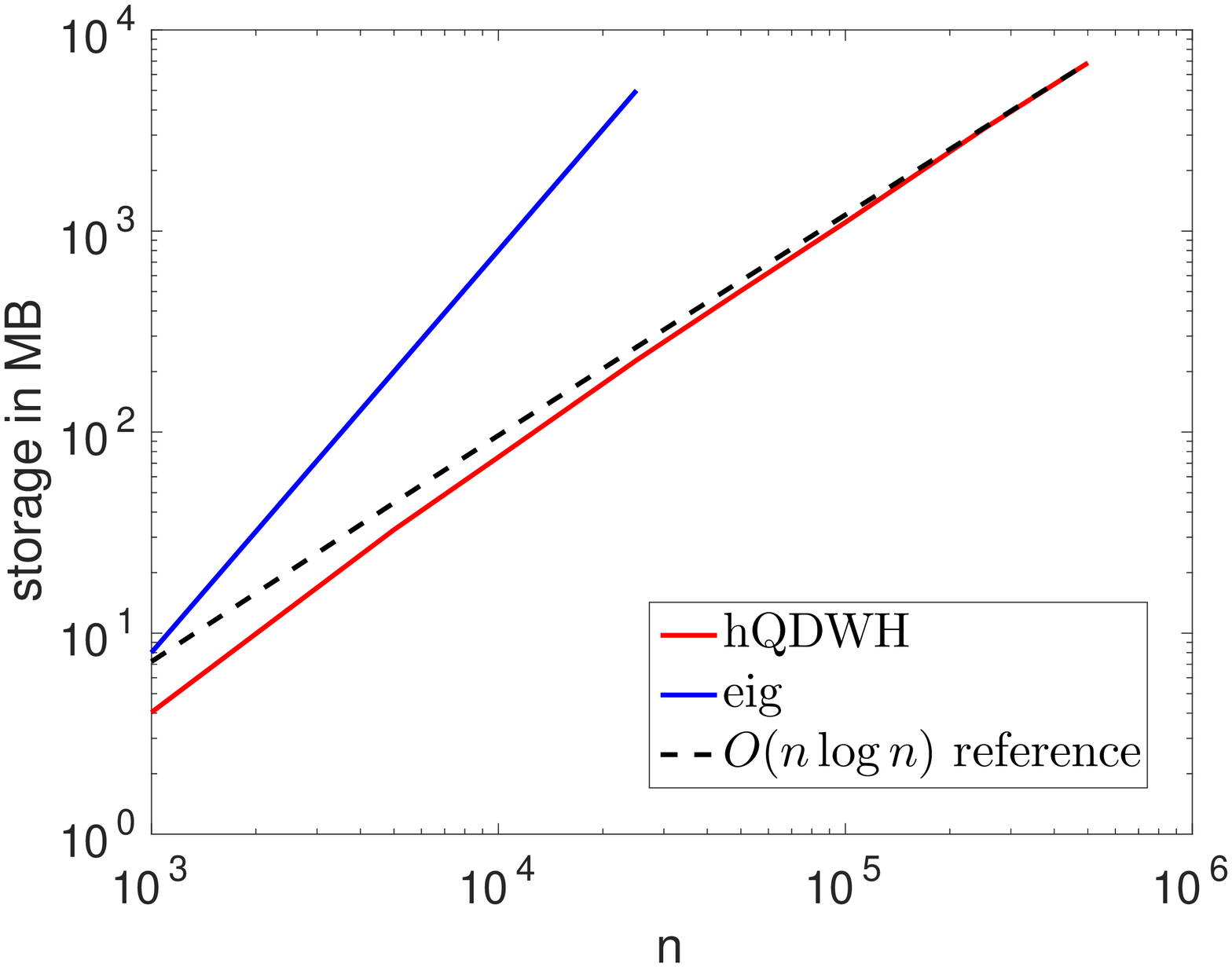}
\end{center}
\caption{Example~\ref{ex:scaling_banded_wrt_n}. Performance with respect to $n$ of hQDWH and \texttt{eig} applied to banded matrices with bandwidth $4$. Left: Computational time. Right: Memory requirements.}

\label{fig:gap_1_band_4_res}
\end{figure}
\end{example} 

\begin{example}[\bfseries{Performance versus $b$}]
\label{ex:scaling_banded_wrt_b}
\textnormal{To verify the influence of the matrix bandwidth on the performance of our algorithm, we consider $100\,000\times 100\,000$ banded matrices with 
eigenvalues contained in $[-1, \hskip 3pt -10^{-6}] \cup [10^{-6}, \hskip 3pt 1]$. Figure~\ref{fig:gap_6_band_res} clearly demonstrates that 
computational time grows quadratically while memory grows linearly with respect to the bandwidth $b$.}

\begin{figure}[!h]
\begin{center}
\includegraphics[width=0.48\textwidth]{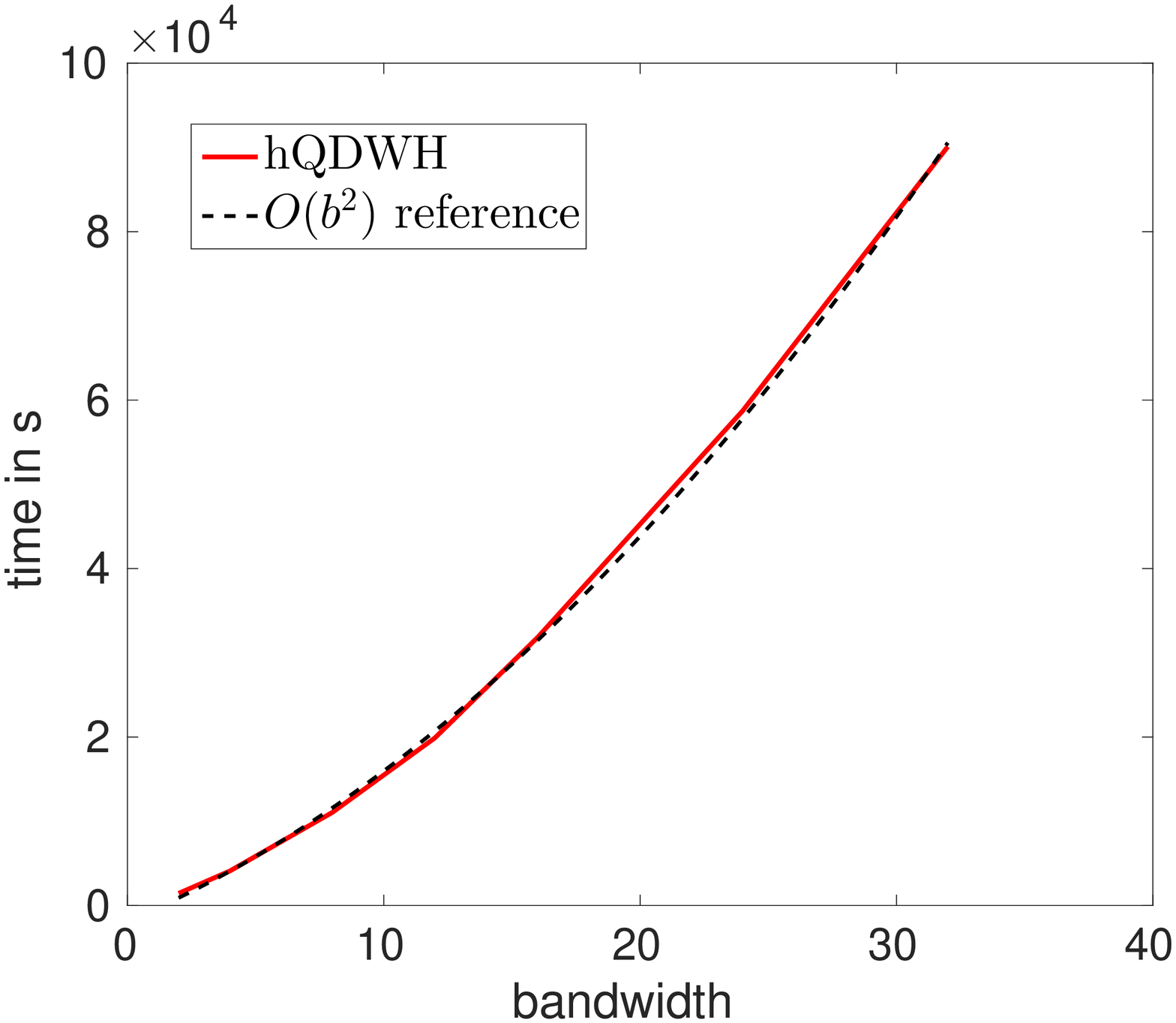}
\includegraphics[width=0.48\textwidth]{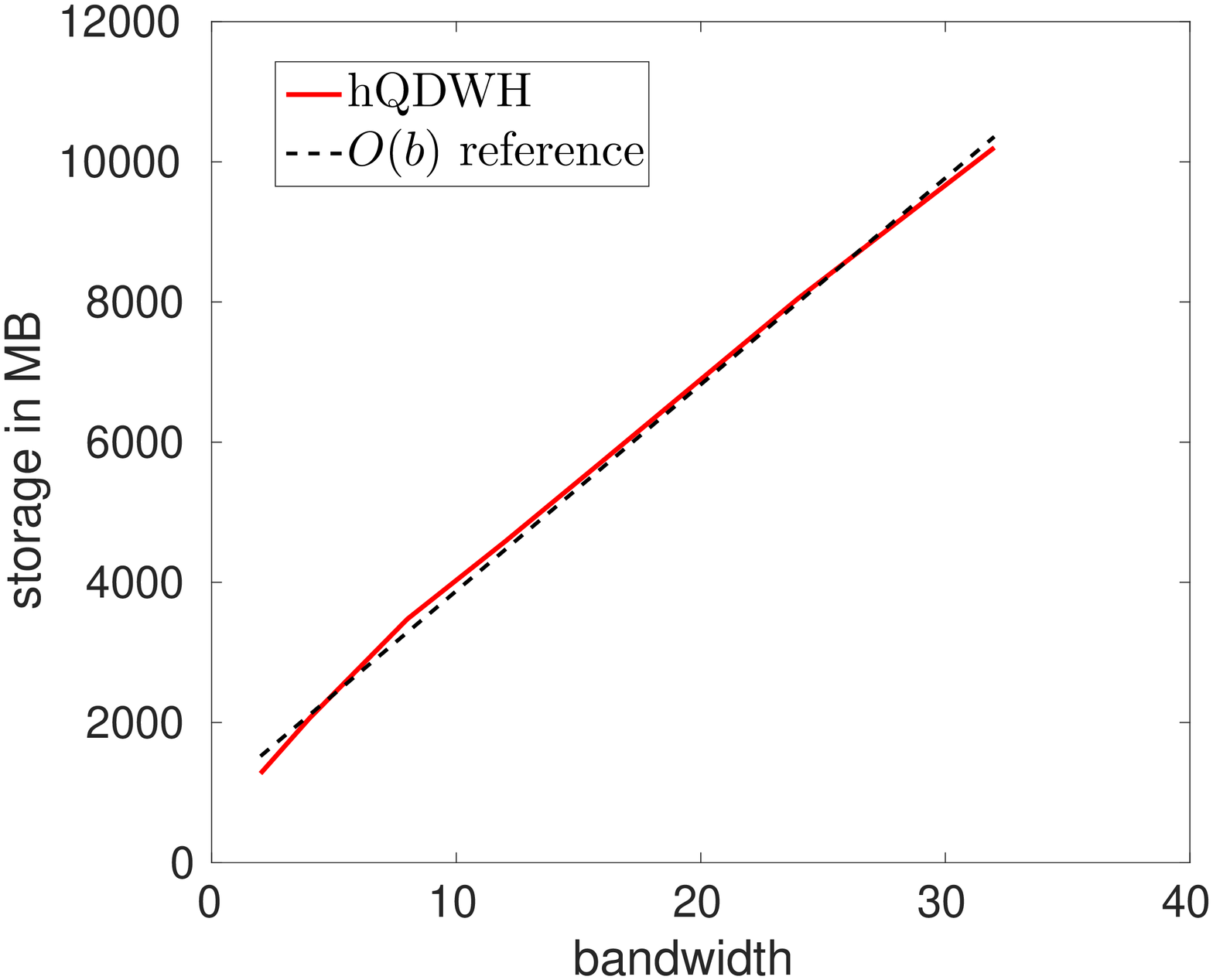}
\end{center}
\caption{Example~\ref{ex:scaling_banded_wrt_b}.
Performance with respect to bandwidth $b$ of hQDWH applied to $100\,000\times 100\,000$ banded matrices. Left: Computational time. Right: Memory requirements.}
\label{fig:gap_6_band_res}
\end{figure}
\end{example}

\section{Conclusion}

In this paper we have developed a fast algorithm for computing spectral projectors of large-scale symmetric banded matrices. For this purpose, we have tailored the 
ingredients of the QDWH algorithm, such that the overall algorithm has linear-polylogarithmic complexity. This allows us to compute highly accurate approximations to the spectral projector for very large sizes (up to $n = 1\,000\,000$ on a desktop computer) even when the involved spectral gap becomes small.

The choice of hierarchical low-rank matrix format is critical to the performance of our algorithm. Somewhat surprisingly, 
we have observed that the relatively simple HODLR format outperforms a more general $\h$-matrix format. We have not investigated the choice of a format with nested low-rank factors, 
such as HSS matrices. While such a nested format likely lowers asymptotic complexity, it presumably only pays off for larger values of $n$.

\begin{paragraph}{Acknowledgements.}
 We are grateful to Jonas Ballani, Petar Sirkovi\'c, and Michael Steinlechner 
 for helpful discussions on this paper, as well as to Stefan G{\"u}ttel for providing us insights into the approximation error results used in Section~\ref{sec:rational_approx}.
\end{paragraph}

\bibliographystyle{plain}
\bibliography{biblio}

\end{document}